\documentclass[12pt]{article}

\catcode`\@=11

\def\ps@headings{\let\@mkboth\markboth
    \def\@oddfoot{}\def\@evenfoot{}
    \def\@evenhead{{\rm \thepage}\hfil { \rm \leftmark}}
    \def\@oddhead{{\rm \rightmark}\hfil {\rm \thepage}}
    \def\chaptermark##1{\markboth{\ifnum \c@secnumdepth >\m@ne
          \@chapapp\ \thechapter. \ \fi ##1}{}}%
    \def\sectionmark##1{\markright{\ifnum \c@secnumdepth >\z@
       \thesection. \ \fi ##1}}}

\ps@headings

\catcode`\@=12

\usepackage{euscript,amscd,amsgen,amssymb,latexsym}

\usepackage{amsmath}
\usepackage{amsthm}
\usepackage{amsfonts}
\usepackage{stmaryrd}
\pagestyle{headings}
\newtheorem{proposition}{Proposition}[section]
\newtheorem{definition}[proposition]{Definition}

\newtheorem{lemma}[proposition]{Lemma}
\newtheorem{theorem}{Theorem}[section]
\newtheorem*{theorem*}{Theorem}
\newtheorem{corollary}[proposition]{Corollary}

\newdimen\AAdi%
\newbox\AAbo%
\def\AAk#1#2{\setbox\AAbo=\hbox{#2}\AAdi=\wd\AAbo\kern#1\AAdi{}}%
\def\AAr#1#2#3{\setbox\AAbo=\hbox{#2}\AAdi=\ht\AAbo\raise#1\AAdi\hbox{#3}}%
%
%
%
%
%
%
%
%
%
%
%
%
%
%
%
%
%
%
%
%
%
%
%
%
%
%
%

\newcommand {\CB}{{\mathcal {B}}}
\newcommand {\CC}{{\mathcal {C}}}
\renewcommand {\CD}{{\mathcal {D}}}

\newcommand {\CM}{{\cal M}}
\newcommand {\CN}{{\cal N}}

\newcommand {\CP}{{\mathcal {P}}}

\newcommand {\CR}{{\cal R}}

\newcommand {\CT}{{\mathcal {T}}}

\newcommand {\CW}{{\cal W}}
\newcommand {\CX}{{\cal X}}
\newcommand {\CY}{{\cal Y}}
\newcommand {\CZ}{{\cal Z}}


\newcommand{\disp}{\displaystyle}
\newcommand{\eps}{\varepsilon}

\def\m1{{-1}}

\newcommand{\ol}{\overline}

\def\s{\sigma}

\newcommand{\wt}{\widetilde}


\newcommand{\inte}[1]{\stackrel{\circ}{#1}}

\newcommand{\ul}[1]{\underline{#1}}
\newcommand{\deri}[2]{\disp\frac{\partial #1}{\partial #2}}

\newcommand{\N}{\Bbb{N}}
\newcommand{\R}{\Bbb{R}}
\newcommand{\C}{\Bbb{C}}
\newcommand{\Z}{\Bbb{Z}}
\newcommand{\T}{\Bbb{T}}



\theoremstyle{definition}
\newtheorem{remark}{Remark}


\usepackage{subeqnarray}
\usepackage{pdfsync,graphicx,epsfig}
\textwidth 16 cm
\textheight 22 cm
\oddsidemargin 0 cm
\evensidemargin 0 cm
\topmargin 0 cm
\usepackage{color,graphics}
\usepackage{epstopdf}
\usepackage{epic}
\usepackage{eepic}
\usepackage{stmaryrd}
\newcommand{\uc}{\underline{c}}
\newcommand{\oc}{\overline{c}}
\DeclareMathOperator{\diam}{diam}	
\DeclareMathOperator{\var}{var}
\DeclareMathOperator{\diag}{diag}

\newcommand{{\cstm}}{{\textit{$\phi$-cstm}}}
\newcommand{{\ecstm}}{{\textit{e$\phi$-cstm}}}
\newcommand{\B}{\CB}
\renewcommand{\C}{\CC}


\begin{document}

\title{Central Limit Theorem for dimension of Gibbs measures for skew expanding maps}
\author{Renaud Leplaideur\& Beno\^\i t Saussol \footnote{Laboratoire de Math\'ematiques,
UMR 6205,
 Universit\'e de
Bretagne Occidentale, 6 rue Victor Le Gorgeu BP 809 
F -  29285 BREST Cedex, renaud.leplaideur@univ-brest.fr \& benoit.saussol@univ-brest.fr}}
\maketitle

\abstract{We consider a class of non-conformal expanding maps on the $d$-dimensional torus. 
For an equilibrium measure of an H\"older potential, we prove an analogue of the Central Limit Theorem for the fluctuations of the logarithm of the measure of balls as the radius goes to zero.

An unexpected consequence is that when the measure is not absolutely continuous, then half of the balls of radius $\eps$ have a measure smaller than $\eps^\delta$ and half of them have a measure 
larger than $\eps^\delta$, where $\delta$ is the Hausdorff dimension of the measure.

We first show that the problem is equivalent to the study of the fluctuations of some Birkhoff sums. Then we use general results from probability theory as the weak invariance principle and random change of time to get our main theorem.

Our method also applies to conformal repellers and Axiom A surface diffeomorphisms and possibly to a class of one-dimensional non uniformly expanding maps. These generalizations are presented at the end of the paper.}

\medskip
\noindent
\date{{\bf Keywords}: Gibbs measure, expanding maps, dimension, Central limit Theorem.\\
{\bf MSC:} 37A35, 37C45, 37D35, 60F05.}

\tableofcontents

\section{Introduction}
\subsection{General background and motivations}

Let consider a $C^{1+\alpha}$ diffeomorphism $T$ acting on some compact Riemaniann manifold $X$. 
We can associate to each $T$-invariant probability $\mu$ several global quantities: the Kolmogorov entropy $h_{\mu}$, the Lyapunov exponents $\lambda_{\mu,1}<\lambda_{\mu,2}<\ldots<\lambda_{\mu,k}$ and the Hausdorff dimension $\delta_{\mu}$; the dimension $\delta_{\mu}$ being the infimum of all the Hausdorff dimensions of sets with positive $\mu$-measure.
Let us assume that the measure is hyperbolic, in the sense that no Lyapunov exponent is zero.

For the case of one dimensional maps, we recall that the Lyapunov exponent is defined by $\lambda_{\mu}:=\int\log|T'|d\mu$. Then, the relation between these three terms is 
$$h_{\mu}=\delta_{\mu}\lambda_{\mu}.$$
For the higher dimensional case,  the relation is (see \emph{e.g.} \cite{Ledrappier-Young1})
$$h_{\mu}=\sum_{i}\delta_{\mu,i}\lambda_{\mu,i}^+,$$
where $\lambda_{\mu,i}^+$ denotes $\max(0,\lambda_{\mu,i})$. The terms $\delta_{\mu,i}$ may be considered as \emph{intermediate unstable} dimensions and we have $\delta_{\mu}^{u}=\disp\sum_{i,\, \lambda_{\mu,i}>0}\delta_{\mu,i}$
(similarly  we have $\delta_{\mu}^s=\disp\sum_{i,\, \lambda_{i}<0}\delta_{\mu,i}$).
On the other hand, associated to the measure $\mu$, there is a notion of local (or pointwise) dimension. We set 
$$\delta_{\mu}(x):=\lim_{\eps\rightarrow 0}\frac{\log\mu(B(x,\eps))}{\log\eps}$$
whenever the limit exists.  Here $B(x,\eps)$ is the open ball of radius $\eps$ centered at $x$. 
It is known (see \cite{Ledrappier-Young2} and \cite{Barreira-Pesin-Schmeling}) that  for $\mu$-almost every point $x$, the pointwise dimension $\delta_{\mu}(x)$ exists, is equal to $\delta_{\mu}$ and
$\delta_{\mu}=\delta_{\mu}^{u}+\delta_{\mu}^s$.

In this article, we study the fluctuations in this convergence for some dynamical systems $(X,T,\mu)$. Namely, we prove a Central Limit Theorem 
\[
\frac{\log\mu(B(x,\eps))-\delta_{\mu}\log\eps}{\sqrt{-\log\eps}}\stackrel{\CD}{\Longrightarrow}\CN(0,\sigma^2).
\]

An unexpected consequence is that when $\sigma\neq0$, then half of the balls of radius $\eps$ have a measure smaller than $\eps^{\delta_\mu}$ and half of them have a measure larger than $\eps^{\delta_\mu}$ (See Corollary~\ref{cor:median}).

The proof of this central limit theorem requires us to work at the \emph{level of processes}. That is, at some point, we need a \emph{functional} central limit theorem. With a little additional effort we also get the functional version of the above central limit theorem, which is the statement of our main theorem that we will now present in detail. 

\subsection{Statement of the Main Theorem}

\subsubsection{The dynamics}
We consider the $d$ dimensional torus $\T^d=\left(\R_{\disp/\Z}\right)^d$.
We denote by $\pi_k$ the canonical projections $\pi_k(x_1,\ldots,x_d)=(x_1,\ldots,x_k)$.

\begin{definition}
\label{def-skew}
A map $T:\T^d\circlearrowleft$ is said to be a \emph{skew product} if it is of the form $T(x)=(f_1(x_1),f_2(x_1,x_2),\ldots,f_d(x_1,\ldots,x_d))$.
\end{definition}

We consider $T:\T^d\circlearrowleft$   a $\CC^{2}$ skew product. 
We assume that $T$ is (uniformly) expanding, in the sense that 
\[
\sup_x \| (d_x T)^{-1} \| <1 
\]

Consider a H\"older continuous function $\varphi:\T^d\rightarrow\R$ called the \emph{potential},
and define its pressure by
\[
P(\varphi) := \sup\left\{h_{\mu}+\int\varphi\,d\mu\right\},
\]
where the supremum is considered on the set of $T$-invariant probabilities.
In this setting the supremum is attained at a unique invariant measure $\mu_{\varphi}$, which is called the equilibrium state of $\varphi$. 

Note that considering such a potential, we can assume that the pressure is equal to zero. This can be realized easily replacing $\varphi$ by $\varphi$ minus the pressure. 

\subsubsection{Skorohod topology}

In this article we shall use the Skorohod topology. 
We refer to \cite{billingsley} chapter 3 for more global setting on this topology.
We denote by $\CD([0,1])$ the set of \emph{cadlag} (french acronym for right continuous with left hand limits) functions on $[0,1]$ endowed with the Skorohod topology:

Two functions $u$ and $v$ in $\CD([0,1])$ are $\rho$-close if there exists $\lambda:[0,1]\to[0,1]$ such that 
\begin{enumerate}
\item $\lambda(0)=0$ and $\lambda(1)=1$ and $\lambda$ is increasing;
\item $\forall\, t\in[0,1]$, $|\lambda(t)-t|\le \rho$,
\item $\forall\, t$, $|u(\lambda(t))-v(t)|\le\rho$.
\end{enumerate}
In other words,  $u$ and $v$ are $\rho$-close in  $\CD([0,1])$ if, 
up to a small change of times, the two functions are $\rho$-close. 
The main feature of the space $\CD([0,1])$ is that it allows discontinuous functions but is still separable.

\subsubsection{Main result and corollaries}

Our main theorem is

\medskip\noindent
{\bf  Main Theorem.}
{\it Let $T:\T^d\circlearrowleft$ be a skew product $\CC^2$ expanding map. 
Let $\varphi$ be a H\"older continuous function from $\T^d$ to $\R$ . 
Let $\mu_{\varphi}$ be the equilibrium state associated to $\varphi$. 
Let $\delta$ be its Hausdorff dimension.

We assume that the sequence 
\[
\lambda_{\mu,i} := \int \log\left|\frac{\partial f_i}{\partial x_i}\right|\circ\pi_i d\mu_\varphi, 
\quad i=1,\ldots,d
\]
is increasing. Then there exists a real number $\s\ge0$ such that the process 
$$
\frac{\log\mu_{\varphi}\left(B(x,\eps^{t})\right)-t\delta\log\eps}{\sqrt{-\log\eps}}
$$ 
converges in $\CD([0,1])$ and in distribution to the process $\sigma W(t)$, where $W$ is the standard Wiener process.

In addition, the variance $\sigma^2$ is zero if and only if $\mu_\varphi$ is the unique absolutely continuous invariant measure, or equivalently $\varphi$ is cohomologous to $-\log |\det DT|$.
}
\medskip

We emphasize that for the absolutely continuous invariant measure, the measure of balls is  completely governed by its density $h$ with respect to the Lebesgue measure: the density is continuous (in fact $C^1$), therefore we have the equivalence 
\[
\mu_\varphi(B(x,\eps)) \sim h(x) \eps^d
\]
for any $x\in\T^d$. Needless to say, there is no point in looking at fluctuations in this case.

\begin{corollary}[Central limit theorem]
With the same assumptions and notations, the family of random variables 
$$
\frac{\log\mu_{\varphi}\left(B(x,\eps)\right)-\delta\log\eps}{\sqrt{-\log\eps}}
$$ 
converges in distribution to the (possibly degenerate) gaussian distribution $\CN(0,\sigma^2)$.
\end{corollary}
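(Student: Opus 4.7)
The plan is to deduce the corollary as an immediate consequence of the Main Theorem by applying the continuous mapping theorem to the evaluation functional $\pi_1 : u \mapsto u(1)$ from $\CD([0,1])$ to $\R$. Indeed, evaluating at $t=1$ the process
$Z_\eps(t) := \frac{\log\mu_\varphi(B(x,\eps^{t}))-t\delta\log\eps}{\sqrt{-\log\eps}}$
produced by the Main Theorem recovers exactly the family of random variables appearing in the corollary, while $\sigma W(1)$ has the $\CN(0,\sigma^2)$ distribution. So the proof reduces to verifying that $\pi_1$ is continuous on $\CD([0,1])$ in the Skorohod topology and then invoking the continuous mapping theorem.

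The continuity of $\pi_1$ is built into the very definition of the Skorohod metric used in the paper. Suppose $v_n \to u$ in $\CD([0,1])$; by definition there exist increasing bijections $\lambda_n : [0,1] \to [0,1]$ with $\lambda_n(1)=1$ and $\sup_{t}|u(\lambda_n(t))-v_n(t)| \to 0$. Specialising to $t=1$ and using $\lambda_n(1)=1$ immediately yields $|u(1)-v_n(1)| \to 0$, so $\pi_1$ is continuous everywhere on $\CD([0,1])$. (The same argument, with $\lambda_n(0)=0$, shows $\pi_0$ is continuous as well, though we will not need it here.)

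Combining these two observations, the Main Theorem gives $Z_\eps \Rightarrow \sigma W$ in $\CD([0,1])$, and the continuous mapping theorem (see \cite{billingsley}, chapter~3) then yields $Z_\eps(1) = \pi_1(Z_\eps) \Rightarrow \pi_1(\sigma W) = \sigma W(1) \sim \CN(0,\sigma^2)$, which is the statement of the corollary.

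I expect no serious obstacle: the corollary is essentially a one-dimensional restatement of the functional version, and all the work has already been done in the Main Theorem. The possibly degenerate case $\sigma = 0$ needs no special treatment either, since then $\sigma W(1)$ is the Dirac mass at $0$ and the continuous mapping theorem applies verbatim, producing convergence in probability of $Z_\eps(1)$ to $0$.
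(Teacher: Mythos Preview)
Your proof is correct and matches the paper's intent: the paper states the corollary without proof, treating it as an immediate consequence of the Main Theorem via the continuous mapping theorem (with a general reference to \cite{billingsley}), and your argument spells out exactly this step. Your observation that $\pi_1$ is continuous everywhere on $\CD([0,1])$ because the time changes $\lambda$ are required to fix the endpoint $\lambda(1)=1$ is precisely the right justification.
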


An immediate consequence is the unexpected balance between ``heavy'' and ``light'' balls, already mentioned in the introduction:

\begin{corollary}[Median]\label{cor:median}
With the same assumptions and notations, if $\mu_\varphi$ is not absolutely continuous then 
$$
\mu_\varphi\left( \left\{ x\colon \mu_{\varphi}\left(B(x,\eps)\right)\le\eps^\delta\right\}\right)
\to \frac12.
$$
\end{corollary}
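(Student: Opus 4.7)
The plan is to derive this corollary as an immediate consequence of the central limit theorem stated in the preceding corollary. First, I would rewrite the event whose measure we want to evaluate: since $\log\eps<0$ for small $\eps$, the inequality $\mu_\varphi(B(x,\eps))\le\eps^\delta$ is equivalent (take logarithms and divide by the positive quantity $\sqrt{-\log\eps}$) to
\[
Z_\eps(x) := \frac{\log\mu_{\varphi}\left(B(x,\eps)\right)-\delta\log\eps}{\sqrt{-\log\eps}} \le 0.
\]
So the quantity to evaluate is simply $\mu_\varphi(\{Z_\eps\le 0\})$.

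Next, I would invoke the central limit theorem corollary, which asserts that under $\mu_\varphi$ the random variable $Z_\eps$ converges in distribution to $\CN(0,\sigma^2)$. The hypothesis that $\mu_\varphi$ is not absolutely continuous, combined with the final clause of the Main Theorem, gives $\sigma>0$, so the limiting law is a non-degenerate centered Gaussian. Since its distribution function is continuous at the threshold $0$, convergence in distribution yields
\[
\mu_\varphi(\{Z_\eps\le 0\}) \longrightarrow \CN(0,\sigma^2)\bigl((-\infty,0]\bigr)=\tfrac12,
\]
which is exactly the claimed convergence.

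There is essentially no technical obstacle here: the content of the statement lies entirely in the two ingredients already packaged in the Main Theorem, namely the Gaussian fluctuations of $\log\mu_\varphi(B(x,\eps))$ around $\delta\log\eps$ and the identification of the degenerate case $\sigma=0$ with absolute continuity of $\mu_\varphi$. The only mild point to record is that $0$ is a continuity point of the limiting distribution function, which is automatic as soon as $\sigma>0$ since the Gaussian c.d.f.\ is then continuous everywhere; this is why the non--absolute continuity hypothesis is essential (for $\sigma=0$ the limit law is a Dirac mass at $0$ and the threshold $0$ is precisely the point of discontinuity, so the median statement would fail without this hypothesis).
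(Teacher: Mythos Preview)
Your proposal is correct and matches the paper's treatment: the paper states this corollary as an ``immediate consequence'' of the central limit theorem corollary without further proof, and your argument---rewriting the event as $\{Z_\eps\le 0\}$, invoking the CLT, and using that $\sigma>0$ makes $0$ a continuity point of the limiting Gaussian---is exactly the intended one-line derivation.
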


We emphasize that the CLT was the main goal of the paper, but the method, at the level of processes, gives as a byproduct several standard corollaries; we refer to~\cite{billingsley} for further precisions about functions of Brownian motion paths.

\begin{corollary}[Maximum and minimum]
With the same assumptions and notations, if $\mu_\varphi$ is not absolutely continuous then 
$$
\mu_\varphi\left( \forall t\in[0,1], \mu_{\varphi}\left(B(x,\eps^t)\right)\le \eps^{t\delta+b\sigma/\sqrt{-\log\eps}}\right) \to \CM(b),
$$
where 
$$
\CM(b)=P(\sup_{t\in[0,1]}W_t\le b)=1-\frac{4}{\pi}\sum_{k=1}^\infty \frac{(-1)^k}{2k+1}e^{-\pi^2(2k+1)^2/8b^2}.
$$
\end{corollary}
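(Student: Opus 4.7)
The plan is to deduce this corollary as a direct consequence of the functional central limit theorem stated in the Main Theorem, via the continuous mapping theorem. I would introduce the process
$$
Y_\eps(t) := \frac{\log\mu_\varphi\bigl(B(x,\eps^t)\bigr) - t\delta\log\eps}{\sqrt{-\log\eps}},
$$
for which the Main Theorem gives $Y_\eps\stackrel{\CD}{\Longrightarrow}\sigma W$ in $\CD([0,1])$, where $\sigma>0$ because $\mu_\varphi$ is assumed not to be absolutely continuous. Taking logarithms in the inequality $\mu_\varphi(B(x,\eps^t))\le\eps^{t\delta+b\sigma/\sqrt{-\log\eps}}$ and using $\log\eps<0$, the event in the corollary becomes a single inequality of the form $\sup_{t\in[0,1]}Y_\eps(t)\le c\sigma$ (with $c=\pm b$ depending on the orientation convention), or equivalently, up to the symmetry $-W\stackrel{d}{=}W$, an inequality on $\inf_{t\in[0,1]}Y_\eps(t)$.

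Next I would invoke the continuous mapping theorem. The supremum functional $f\mapsto\sup_{t\in[0,1]}f(t)$ is not continuous everywhere on $\CD([0,1])$, but it \emph{is} continuous at every function $f$ that is continuous on $[0,1]$; indeed, Skorokhod convergence to a continuous limit is equivalent to uniform convergence. Since the paths of $\sigma W$ are almost surely continuous, the continuous mapping theorem applies and delivers
$$
\sup_{t\in[0,1]}Y_\eps(t)\stackrel{\CD}{\Longrightarrow}\sigma\sup_{t\in[0,1]}W_t.
$$
The law of $\sigma\sup_{[0,1]}W$ is continuous, so the probabilities of half-lines pass to the limit and yield the claimed convergence to $\CM(b)=P(\sup_{[0,1]}W_t\le b)$. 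The same argument of course works with $\sup$ replaced by $\inf$, which is what actually intervenes after the sign chase mentioned above.

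The explicit series representation of $\CM(b)$ is a classical computation obtained either from the reflection principle or, equivalently, from the spectral expansion of the absorbed heat semigroup on a finite interval, and I would simply cite the relevant chapter of Billingsley for this. The only mildly delicate point of the proof is the justification of the continuous mapping theorem on the Skorokhod space, whose natural functionals (supremum, infimum, functionals of occupation time, etc.) are only continuous on the subset $\CC([0,1])\subset\CD([0,1])$; this is the reason why the Main Theorem was proved at the functional level and why the a.s.\ path-continuity of the Brownian limit is essential at this step.
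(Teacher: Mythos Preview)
Your proposal is correct and matches the paper's approach: the paper gives no explicit proof of this corollary, presenting it as a standard byproduct of the functional CLT via the continuous mapping theorem and referring the reader to Billingsley for the distribution of path functionals of Brownian motion. Your write-up simply makes this standard argument explicit, including the key observation that the supremum functional is continuous at continuous paths so that the continuous mapping theorem applies on $\CD([0,1])$; the only loose end is the sign chase, which you flag but do not carry out---doing so shows the event is $\{\sup_t Y_\eps(t)\le -b\sigma\}$, so the limit is $P(\sup_t W_t\le -b)$, and you may want to reconcile this carefully with the formula for $\CM(b)$ as stated.
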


\begin{corollary}[Arc-sine law]
With the same assumptions and notations, if $\mu_\varphi$ is not absolutely continuous then, the family of random variables 
$$
\CT_\eps(x) := Leb\left(t\in[0,1]\colon\mu_{\varphi}\left(B(x,\eps^t)\right)\le\eps^{t\delta}\right)
$$ 
converges in distribution to the Arc-sine law (recall that $U$ follows the arc-sine law if $P(U\le u)=\frac{2}{\pi}\arcsin\sqrt{u}$).
\end{corollary}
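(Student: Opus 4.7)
The plan follows the two-step strategy announced in the abstract: express $\log\mu_\varphi(B(x,\eps^t))$ as a sum of Birkhoff sums associated to the $d$ different Lyapunov exponents, then invoke a functional central limit theorem together with a random change of time.

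\emph{Step 1 --- from balls to Birkhoff sums.} Because $T$ is a skew product, $DT$ is lower triangular, so the Bowen rectangles $B_n(x,r)$ are, up to bounded distortion, boxes of side $\approx r\,e^{-n\lambda_{\mu,i}}$ in direction $i$; since the $\lambda_{\mu,i}$ are distinct, no single Bowen rectangle matches a Euclidean ball $B(x,\eps^t)$. I would therefore use the tower of factor systems $T_k:\T^k\circlearrowleft$ with $\pi_k\circ T=T_k\circ\pi_k$, together with the disintegration of $\mu_\varphi$ along the fibres of $\pi_k\to\pi_{k-1}$. Each conditional measure inherits a \emph{relative} Gibbs property governed by a H\"older potential $\psi_k$ on $\T^k$, in the spirit of \cite{Ledrappier-Young2}. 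Iterating the Gibbs estimate at the (random) scale
\[
n_k(t,\eps,x) := \min\{n:\text{the time-}n\text{ Bowen rectangle in }\T^k\text{ through }\pi_k(x)\text{ fits into width }\eps^t\}
\]
should yield, uniformly in $x,t,\eps$,
\[
\log\mu_\varphi(B(x,\eps^t)) = \sum_{k=1}^d S_{n_k(t,\eps,x)}\psi_k(x) + O(1),
\]
with the $\psi_k$ normalised so that $\delta = -\sum_k\int\psi_k\,d\mu_\varphi/\lambda_{\mu,k}$. The comparison of round Euclidean balls with the intersection of the $d$ nested relative Bowen rectangles, with an error sufficiently small that $O(1)/\sqrt{-\log\eps}\to 0$, is the step I expect to be the main technical obstacle.

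\emph{Step 2 --- weak invariance principle and random time change.} After subtracting $t\delta\log\eps$ and using that $n_k(t,\eps,x)/(-t\log\eps)\to 1/\lambda_{\mu,k}$ $\mu_\varphi$-a.e. (Birkhoff), it suffices to prove
\[
\frac{1}{\sqrt{-\log\eps}}\sum_{k=1}^d\Bigl(S_{n_k(t,\eps,x)}\psi_k(x)-n_k(t,\eps,x)\textstyle\int\psi_k\,d\mu_\varphi\Bigr)\;\Longrightarrow\;\sigma W(t)
\]
in $\CD([0,1])$. Since $T$ is uniformly expanding and each $\psi_k$ is H\"older, the vector $(\psi_1,\dots,\psi_d)$ satisfies a joint weak invariance principle under $\mu_\varphi$ (spectral gap of the transfer operator, or Gordin-type martingale decomposition): the centred vector of Birkhoff sums converges in $\CD([0,1])^d$ to a $d$-dimensional Brownian motion $(B_1,\ldots,B_d)$ whose covariance is the Green--Kubo matrix $\Sigma$ of $(\psi_1,\ldots,\psi_d)$. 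A random change of time sending the $k$-th index from $n_k(t,\eps,x)$ to $t/\lambda_{\mu,k}$ (a continuous mapping in $\CD$) then identifies the limit as $F(t):=\sum_k B_k(t/\lambda_{\mu,k})$; a covariance computation exploiting the nested filtrations $\sigma(\pi_k)$ carried by the $\psi_k$ reduces $\mathrm{Cov}(F(t),F(s))$ to $\sigma^2(s\wedge t)$ with $\sigma^2=\sum_k\Sigma_{kk}/\lambda_{\mu,k}$, so that $F$ has the law of $\sigma W$.

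\emph{Step 3 --- the degenerate case.} By the martingale-coboundary representation underlying the WIP, $\sigma^2=0$ forces the relevant linear combination of the $\psi_k$ to be an $L^2$-coboundary for $T$; unwinding the definitions of the $\psi_k$ in terms of projected pressures, this cohomological identity is equivalent to $\varphi+\log|\det DT|$ being cohomologous to a constant (which, by taking pressures, must equal $P(\varphi)$). The equilibrium state of $-\log|\det DT|$ for a $C^2$ expanding map is the unique absolutely continuous invariant measure, with a $C^1$ density by transfer-operator theory, so $\sigma^2=0$ is equivalent to $\mu_\varphi$ being absolutely continuous, which closes the dichotomy and completes the argument.
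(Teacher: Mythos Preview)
Your proposal is a sketch of the paper's \emph{Main Theorem} (the functional CLT for the process $N_\eps$), and it follows essentially the same two-step strategy as the paper: reduction of $\log\mu_\varphi(B(x,\eps^t))$ to a non-homogeneous Birkhoff sum via the skew product structure and the Gibbs property of the projected measures, then the weak invariance principle combined with a random change of time. On that front there is nothing substantially new to compare.

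The genuine gap is that the statement you were asked to prove is the \emph{Arc-sine law corollary}, and your proposal never addresses it. In the paper this corollary is not given a separate proof; it is listed among the ``standard corollaries'' of the Main Theorem obtained via known facts on functionals of Brownian paths (with a reference to Billingsley). Concretely, the missing deduction is this: write $\CT_\eps = L(N_\eps)$ where $L\colon\CD([0,1])\to[0,1]$ is the occupation-time functional $L(w)=Leb\{t\in[0,1]\colon w(t)\le 0\}$. The map $L$ is not continuous on all of $\CD([0,1])$, but it is continuous at every path $w$ with $Leb\{t\colon w(t)=0\}=0$, and this holds $P_{\sigma W}$-almost surely as soon as $\sigma>0$, i.e.\ precisely when $\mu_\varphi$ is not absolutely continuous. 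The continuous mapping theorem (in the version for functionals continuous on a set of full limiting measure) then gives $\CT_\eps\Rightarrow L(\sigma W)=L(W)$, and L\'evy's arc-sine law identifies the distribution of $L(W)$. Without this step the corollary is not established, even granting your Steps~1--3.

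A secondary remark on your Step~2: the formula $\sigma^2=\sum_k\Sigma_{kk}/\lambda_{\mu,k}$ drops the off-diagonal entries of the Green--Kubo matrix. The paper's two-dimensional computation produces cross terms, and your ``nested filtrations'' heuristic does not force them to vanish, since both $\phi_1$ and $\phi_2$ involve the projected potential $\psi\circ\pi$. This does not affect the arc-sine corollary (only $\sigma\neq 0$ matters there), but it is a gap in your identification of the limiting variance.
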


\subsection{Steps of the proof and structure of the paper}

To clarify the exposition the proof will be made in the two-dimensional case. For convenience we will denote points in $\T^2$ by $(x,y)$ and assume that the map $T$ is of the form $T(x,y)=(f(x),g(x,y))$. We set $\pi(x,y)=x$.

The proof has two main steps. In a first part (Section~\ref{sec:reduc}), we use dynamical and ergodic arguments to reduce the problem to the study of the convergence of some process of the form (see Lemma \ref{lem:primprim})
\begin{equation}
\label{equ1-form-sympa}
\frac{S_{n_{\eps^t}}\phi_1 + S_{m_{\eps^t}}\phi_2}{\sqrt{-\log\eps}},
\end{equation}
where $n_{\eps}$ and $m_{\eps}$ are random ``times''. 

Then, in Section~\ref{section-proba-wip} we use arguments from Probability Theory to prove the convergence of this last process. These arguments are somehow general and independent of the functions $\phi_{1}$ and $\phi_{2}$. 

We mention that the use of the Skorohod topology is perhaps not necessary. It seems useful because the process we study is \emph{a priori} discontinuous. However, note that the limit process is \emph{a.e.} continuous. Therefore the convergence is uniform. Nevertheless, 
the space of cadlag functions endowed with the norm of uniform convergence is not separable, which may cause some troubles as pointed out by P. Billingsley in \cite{billingsley}. We thus preferred to work in $\CD([0,1])$. 

\medskip

Our method also applies to conformal repeller and Axiom A surface diffeomorphisms. These adaptations are presented in Section~\ref{sec:general}. Hypothesis of uniform expansion does not seem to be so crucial and we also discuss some possible extensions of our main result at the end of the paper.

\section{Reduction to a non-homogeneous sum of random variables}\label{sec:reduc}

\subsection{A fibered Markov partition}\label{subsec-parti-markov}

Given $(x_0,y_0)\in\T^2$ we denote $S_0=\{x_0\}\times\T\cup\T\times\{y_0\}$.
\begin{lemma}
\label{lem-partimarkov}
For any $(x_0,y_0)\in\T^2$, there exist a finite partition $\CR$ of $\T^2$ in Markov proper sets $R_{i}$ such that 
\begin{enumerate}
\item 
For each element $R_{i}$ of the partition, $T(R_{i})=\T^2$ and $T_{|\inte R_{i}}$ is one-to-one. 
\item 
$\pi(\inte R_{i})\cap \pi(\inte R_{j})=\emptyset$ or $\pi(\inte R_{i})= \pi(\inte R_{j})$.
\item
The boundary $\partial \CR$ is mapped to $T(\partial R_i)\subset S_0$ 
\item
$\CP=\pi(\CR)$ is a Markov partition for $f$.
\end{enumerate}
\end{lemma}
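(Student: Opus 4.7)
The plan is to build the partition in two stages, first on the base and then in each fiber, using the skew-product structure together with the fact that uniform expansion of $T$ forces both $f$ and each fiber map $g(x,\cdot)$ to be expanding maps of the circle.

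First, I would use the base map $f:\T\to\T$, which is $C^2$ expanding, to construct a Markov partition $\CP_0$ of $\T$ from $x_0$: letting $f^{-1}(x_0)=\{a_1,\ldots,a_\ell\}$, the connected components of $\T\setminus f^{-1}(x_0)$ give open intervals $P_1,\ldots,P_\ell$, each of which is mapped diffeomorphically by $f$ onto $\T\setminus\{x_0\}$. This is a Markov partition of $\T$ whose boundary lies in $\{x_0\}$, and item 4 is automatic if we set $\CP:=\CP_0$.

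Next, within each column $P_j\times\T$, I would cut along the preimages of $y_0$ under the fiber map. For each $x$, the map $y\mapsto g(x,y)$ is a $C^2$ expanding map of $\T$ (this follows from the lower triangular form of $dT$ together with $\|dT^{-1}\|<1$, which forces $|\partial_y g|$ to be bounded below by some constant greater than $1$). In particular, $y\mapsto g(x,y)$ has a fixed degree $k$ independent of $x$, so the set $g(x,\cdot)^{-1}(y_0)$ consists of exactly $k$ points depending continuously (in fact $C^2$) on $x$; call them $y_1(x)<\cdots<y_k(x)$, chosen continuously on the simply connected set $P_j$. These graphs $\{(x,y_i(x)):x\in P_j\}$, together with the two vertical walls $\{a_j\}\times\T$ and $\{a_{j+1}\}\times\T$, partition $P_j\times\T$ into $k$ open curvilinear rectangles $R_{j,1},\ldots,R_{j,k}$. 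I would then define $\CR$ as the collection of all such $R_{j,i}$.

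Finally, I would verify the four properties. For item 1: on $\inte R_{j,i}$, the first coordinate of $T$ is injective since $f|_{P_j}$ is a diffeomorphism onto $\T\setminus\{x_0\}$, and the second coordinate is injective because $g(x,\cdot)$ is a homeomorphism from $(y_{i-1}(x),y_i(x))$ onto $\T\setminus\{y_0\}$; joint surjectivity $T(R_{j,i})=\T^2$ follows. For item 2: each $\pi(\inte R_{j,i})=P_j$, so projections either coincide or are disjoint. For item 3: $\partial R_{j,i}$ is the union of vertical pieces mapped to $f(a_j)\times\T=\{x_0\}\times\T$ and graph pieces $\{(x,y_i(x)):x\in\overline{P_j}\}$ mapped to $f(\overline{P_j})\times\{y_0\}\subset\T\times\{y_0\}$, so $T(\partial R_{j,i})\subset S_0$. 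Item 4 is built in by construction.

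The main technical point I expect to be slightly delicate is the continuity and single-valuedness of the fiber preimages $y_i(x)$, i.e. extracting from $\|dT^{-1}\|<1$ a genuine lower bound on $|\partial_y g|$ so that $g(x,\cdot)$ is a covering of fixed degree; once this is in place, the construction and verification are routine.
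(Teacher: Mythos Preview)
Your proposal is correct and follows essentially the same approach as the paper. The paper's proof is simply a more compressed version of yours: it observes that $T$ is a covering map (being an expanding local diffeomorphism) and takes the $R_i$ directly as the closures of the connected components of $\T^2\setminus T^{-1}S_0$, whereas you unpack this set explicitly as the union of the vertical lines $f^{-1}(x_0)\times\T$ and the fiberwise graphs $\{g(x,\cdot)=y_0\}$, which amounts to the same partition.
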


\begin{proof}
As the map $T$ is a local diffeomorphism, the map $f$ is also a local diffeomorphism of $\T$. Both are onto.  Thus they are coverings with finite covers. 

Denote by $P_i$'s the collection of the closure of the connected components of $\T\setminus f^{-1}(\{x_0\})$. Each $\inte P_{i}$ is mapped by $f$ one-to-one, $f(P_{i})=\T$ and $f(\partial P_i)=\{x_0\}$. 

Similarly, the closure of the connected components of $\T^2\setminus T^{-1}S_0$ defines a finite collection of sets $R_i$ which fulfill the hypotheses (see Figure~\ref{fig-markovparti}).
\begin{figure}[htbp]
\begin{center}
\includegraphics[scale=0.5]{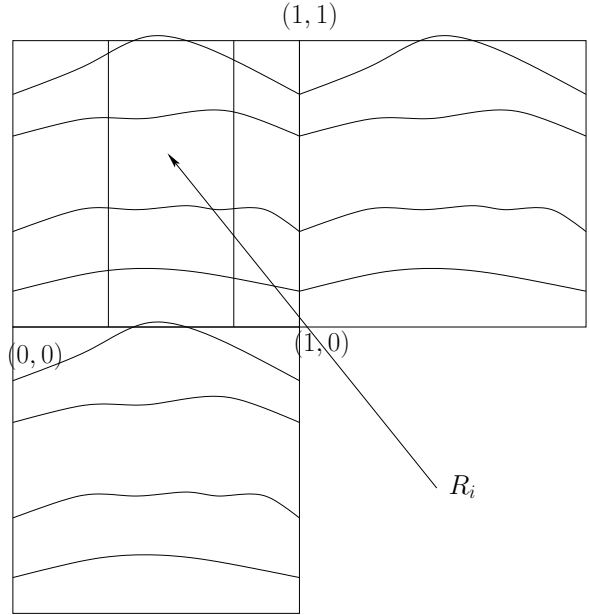}
\caption{Markov partition in nice proper sets}
\label{fig-markovparti}
\end{center}
\end{figure}
By construction, $T$ is one-to-one on $\inte R_{i}$ and $T(R_{i})=\T^2$. Now, for each $k$, $\pi(R_{k})$ is one of the $P_{i}$'s. These $P_{i}$'s have disjoint interior. 
\end{proof}

For $x$ in $\T$, $\CP_{n}(x)$ denote the element of the partition $\disp\bigvee_{k=0}^{n-1}f^{-k}(\CP)$ which contains $x$. Note that it is well defined up to the boundary of this ``partition''. Similarly we define $\CR_{n}(x,y)$.  By construction $\pi(\CR_{n}(x,y))=\CP_{n}(x)$. 

The border of the partition $\partial\CR_{n}$ is going to play an important role. For a fixed point $(x,y)$ and for an integer $n$, the border or $\CR_{n}(x,y)$ is denoted by $\partial\CR_{n}(x,y)$. It is the union of a vertical border $\partial^v\CR_{n}(x,y)$ and a horizontal border $\partial^{h}\CR_{n}(x,y)$. The vertical border is exactly the union of two vertical segments (its projection by $\pi$ is the union of two different points). The horizontal border is the union of two ``relatively'' horizontal curves. Their slope is studied in Lemma \ref{lem-slope-border}. 

We emphasize that the union over all integers of these borders is not an $T$-invariant set. In particular note that $T(\CR)$ has no boundary.

\subsection{Lyapunov exponents and geometry of the partition}
Given $f:\T\to\T$ and $g:\T^2\to\T$ two $\CC^1$ maps, we define for all integer $n$ 
\begin{equation}\label{eq:FnGn}
F_n=\prod_{j=0}^{n-1}f'\circ f^j\circ\pi,\quad
G_n=\prod_{j=0}^{n-1}\frac{\partial g}{\partial y}\circ T^j.
\end{equation}

\begin{lemma}
\label{lem-expo-vertifibred}
Let $T:\T^2\circlearrowleft$ be as in the Theorem. We set $T(x,y)=(f(x),g(x,y))$.
There is an invariant splitting $T\T^2=E^u\oplus E^{uu}$ defined $\mu$-a.e. 
The two associated Lyapunov exponents of $(T,\mu)$ are $\disp\lambda^u:=\int \log|f'(x)|\,d\mu_{\varphi}(x,y)$ and $\disp\lambda^{uu}:=\int\log\left|\deri{g}{y}(x,y)\right|\,d\mu_{\varphi}(x,y)$. 
\end{lemma}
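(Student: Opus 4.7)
The plan is to exploit the lower-triangular structure of the differential
$$
D_{(x,y)}T=\begin{pmatrix} f'(x) & 0 \\ \partial_x g(x,y) & \partial_y g(x,y)\end{pmatrix}.
$$
The vertical subbundle $E^{uu}(x,y):=\{0\}\times\R$ is immediately $DT$-invariant, with $DT$ acting on it as multiplication by $\partial_y g(x,y)$. Using that $\mu_\varphi$ is ergodic (being the unique equilibrium state of a H\"older potential for a uniformly expanding map) and Birkhoff applied to $\log|\partial_y g|$, the Lyapunov exponent along $E^{uu}$ is then $\mu_\varphi$-a.e.\ equal to $\lambda^{uu}$.

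The next step is to construct a complementary invariant bundle in the form $E^u(x,y)=\mathrm{span}(1,a(x,y))$. The invariance condition $DT\cdot(1,a(x,y))\parallel(1,a(T(x,y)))$ rewrites as
$$
a(x,y)=\frac{f'(x)}{\partial_y g(x,y)}\,a(T(x,y))-\frac{\partial_x g(x,y)}{\partial_y g(x,y)},
$$
and iterating telescopes to
$$
a(x,y)=\frac{F_n(x)}{G_n(x,y)}\,a(T^n(x,y))-\sum_{j=0}^{n-1}\frac{F_j(x)}{G_j(x,y)}\cdot\frac{\partial_x g}{\partial_y g}\circ T^j(x,y),
$$
with $F_n,G_n$ as in \eqref{eq:FnGn}. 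The assumption $\lambda^u<\lambda^{uu}$ together with Birkhoff applied to $\log|f'|$ and $\log|\partial_y g|$ gives $\frac{1}{n}\log|F_n/G_n|\to\lambda^u-\lambda^{uu}<0$ $\mu_\varphi$-a.e., so the leading term vanishes as $n\to\infty$ and the series converges absolutely (using that $\partial_x g$ and $1/\partial_y g$ are bounded by $C^2$-smoothness and uniform expansion). This defines a measurable function $a$ and hence the line field $E^u$, which meets $E^{uu}$ trivially because its slope is finite.

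In the frame $\{(1,a(x,y)),(0,1)\}$ the matrix of $DT$ becomes diagonal with entries $f'(x)$ and $\partial_y g(x,y)$, so Birkhoff applied to $\log|f'\circ\pi|$ identifies the Lyapunov exponent along $E^u$ as $\lambda^u$, in agreement with the consistency relation $\lambda^u+\lambda^{uu}=\int\log|\det DT|\,d\mu_\varphi$. The main delicate point will be the almost-sure convergence of the series defining $a$: the decay of $F_n/G_n$ is only asymptotic, so I expect to handle it by choosing, for each small $\eta>0$, a full-measure set on which $|F_n/G_n|\le e^{n(\lambda^u-\lambda^{uu}+\eta)}$ for all $n$ sufficiently large, which dominates the tail of the series geometrically. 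Measurability of $a$ then follows from measurability of each partial sum, and $E^u\oplus E^{uu}$ provides the splitting announced in the lemma.
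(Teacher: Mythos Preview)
Your proof is correct and follows essentially the same approach as the paper: both exploit the lower-triangular structure of $DT$, take $E^{uu}$ to be the vertical direction, and construct $E^u$ as the span of $(1,U)$ where $U$ is given by an absolutely convergent series whose convergence follows from $\lambda^u<\lambda^{uu}$ via Birkhoff. Your series $a=-\sum_{j\ge0}\frac{F_j}{G_j}\cdot\frac{\partial_x g}{\partial_y g}\circ T^j$ is exactly the paper's $U=-\sum_{k\ge0}\frac{F_k}{G_{k+1}}\partial_x g\circ T^k$ after absorbing the factor $(\partial_y g\circ T^j)^{-1}$ into $G_{j+1}$; the only stylistic difference is that you derive the series by iterating the invariance equation, whereas the paper writes down $U$ and checks invariance a posteriori.
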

\begin{proof}
By the ergodic theorem we have
\begin{equation}\label{eq:lyap}
\lim \frac{1}{n}\log F_n = \lambda^u < \lambda^{uu} = \lim\frac{1}{n}\log G_n.
\end{equation}
Therefore, the series 
\[
U=-\sum_{k=0}^\infty \frac{F_k}{G_{k+1}}\frac{\partial g}{\partial x}\circ T^k
\]
converges almost everywhere.
Define the splitting 
$$
E^u=\left(\begin{matrix} 1\\ U\end{matrix}\right),\quad E^{uu}=\left(\begin{matrix} 0\\1\end{matrix}\right).
$$
One directly checks that as announced the splitting is invariant:
$$
D_{(x,y)}T\left(\begin{matrix} 1\\ U(x,y)\end{matrix}\right)=f'(x)\left(\begin{matrix} 1\\ U\circ T(x,y)\end{matrix}\right),
\quad
D_{(x,y)}T\left(\begin{matrix} 0\\1\end{matrix}\right)=\frac{\partial g}{\partial y}(x,y)\left(\begin{matrix} 0\\1\end{matrix}\right).
$$
\end{proof}

We will need some estimates for the top and bottom borders $\partial^h\CR_{n}$ of the partition $\CR_n$. 
Note that if a point $(x,y)$ belongs to $\partial^h\CR_{n}$ then, it also belongs to $\partial^h\CR_{m}$ for every $m\ge n$. 
We denote by $\CT_{x,y,n}$ the slope of the tangent to $\partial^h\CR_{n}$ at $(x,y)$.

\begin{lemma}\label{lem-slope-border}
\label{lem-borders}
For every $n$ and for $\mu_{\varphi}$-almost every $(x,y)$ there exists a real number  $C_{\partial^h}(x,y)$ such that for every $(x',y')$ in $\partial^h\CR_{n}(x,y)$, , 
$$|\CT_{x',y',n}|\le C_{\partial^h}(x,y).$$
\end{lemma}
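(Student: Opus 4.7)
The plan is to write the horizontal border curves as level sets of an explicit iterate of the second coordinate, differentiate implicitly, and recognize the resulting slope as a partial sum of the series whose a.e.\ convergence was already established in the proof of Lemma~\ref{lem-expo-vertifibred}. By the Markov construction (Lemma~\ref{lem-partimarkov}) any point $(x',y')\in\partial^h\CR_n(x,y)$ satisfies $T^{m+1}(x',y')\in\T\times\{y_0\}$ for some $0\le m\le n-1$. Writing $T^k(x,y)=(f^k(x),G_k(x,y))$ with $G_{k+1}=g\circ(f^k\times G_k)$ and $G_0(x,y)=y$, the smooth arc of $\partial^h\CR_n$ through $(x',y')$ is precisely a level set of $G_{m+1}$; implicit differentiation gives
\[
\CT_{x',y',n}=-\frac{\partial G_{m+1}/\partial x}{\partial G_{m+1}/\partial y}(x',y').
\]
A short induction on the recursion for $G_{k+1}$, using the chain rule, the identity $\partial G_k/\partial y=G_k$ from~\eqref{eq:FnGn} and $(f^k)'=F_k$, collapses this ratio into
\[
\CT_{x',y',n}=-\sum_{j=0}^{m}\frac{F_j(x')}{G_{j+1}(x',y')}\,\frac{\partial g}{\partial x}\bigl(T^j(x',y')\bigr),
\]
which is the partial sum up to index $m$ of the very series whose absolute convergence was used to build the splitting in Lemma~\ref{lem-expo-vertifibred}.

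The next step is to bound this partial sum uniformly in $m\le n-1$ and in $(x',y')\in\CR_n(x,y)$ by transferring the estimate to the base point $(x,y)$. Since $(x',y')$ and $(x,y)$ lie in the same element of $\CR_n$, for every $j\le n-1$ the iterates $T^j(x',y')$ and $T^j(x,y)$ lie in a common element of $\CR_{n-j}$, whose diameter is bounded by $K\lambda^{-(n-j)}$ in both coordinates for some $\lambda>1$ by uniform expansion of $T$. A classical H\"older bounded distortion argument, using the $\CC^2$ regularity of $T$ applied separately to $\log f'$ along the base cylinder $\CP_{n-j}$ and to $\log\frac{\partial g}{\partial y}$ along $\CR_{n-j}$, then yields a constant $D$ depending only on $T$ with
\[
\frac{F_j(x')}{F_j(x)},\;\frac{G_j(x',y')}{G_j(x,y)}\in[D^{-1},D],\qquad 0\le j\le n.
\]
Combining this with the continuous (hence bounded) factor $\partial g/\partial x$ on the compact torus, we obtain
\[
|\CT_{x',y',n}|\le D'\sum_{j=0}^{m}\frac{F_j(x)}{G_{j+1}(x,y)},
\]
with $D'$ depending only on $T$.

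To conclude, I would invoke the fact that, by the assumption $\lambda^u<\lambda^{uu}$ and Birkhoff's theorem (already used in the proof of Lemma~\ref{lem-expo-vertifibred}), the series $\sum_{j\ge 0} F_j(x)/G_{j+1}(x,y)$ converges at $\mu_\varphi$-almost every $(x,y)$. All its partial sums are then uniformly dominated by a measurable function $C_{\partial^h}(x,y)<\infty$, which gives the announced bound.

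The main obstacle is the bounded distortion step for the ratio $G_j(x',y')/G_j(x,y)$: the classical one-dimensional argument for $F_j$ transfers directly to the base dynamics $(f,\CP)$, but for $G_j$ one has to control the oscillation of $\log\frac{\partial g}{\partial y}$ along the whole two-dimensional element $\CR_{n-j}(T^j(x,y))$, so one needs the diameter of $\CR_k$ to shrink geometrically also in the $y$-direction. This is exactly what the uniform expansion hypothesis $\sup_x\|(d_xT)^{-1}\|<1$ ensures; once this observation is made clean, the lemma reduces to the telescoping identity for the slope and the a.e.\ convergence already established.
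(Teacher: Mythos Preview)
Your argument is correct and follows essentially the same route as the paper: identify the slope at a boundary point as a partial sum of the series defining $U$ in Lemma~\ref{lem-expo-vertifibred}, transfer the estimate to the base point $(x,y)$ via bounded distortion on the common $n$-cylinder, and conclude by the a.e.\ convergence of that series. The paper obtains the slope by pulling back the horizontal vector $(1,0)$ through $(D_{(x',y')}T^n)^{-1}$ and reading off the second coordinate, which is exactly your implicit-differentiation computation in matrix form; your tracking of the index $m+1\le n$ at which the boundary arc was created is in fact slightly more precise than the paper's direct use of $n$. One cosmetic point: you overload the symbol $G_k$ for both the second coordinate of $T^k$ and the product $\prod_{j<k}\partial_y g\circ T^j$ from~\eqref{eq:FnGn}; the intended meaning is clear from context, but it would be cleaner to use a different letter for the coordinate map.
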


\begin{proof}
We assume that $(x,y)$ is  such that the invariant splitting is defined. 
For $(x',y')$ in $\partial^{h}\CR_{n}(x,y)$, we set 
$$U_{n}:= -\sum_{k=0}^{n-1}\frac{F_k(x')}{G_{k+1}(x',y')}\frac{\partial g}{\partial x}\circ T^k(x',y').$$
Set $(\alpha,\beta)=(D_{(x',y')} T^n)^{-1}(1, 0)$. Then $(\alpha,\beta)$ is tangent to $\partial^{h}\CR_{n}(x,y)$ at $(x',y')$. Moreover we get 
$$DT^n=\left(\begin{matrix} F_n & 0\\ -G_n U_n & G_n\end{matrix}\right).$$

Therefore the slope  of $\disp  \left(\begin{matrix}\alpha \\\beta\end{matrix}\right) $ in the canonical basis is 
$$
\beta/\alpha = U_n.
$$
The bounded distortion property shows that there exists a constant $C_T$ such that for all  $(x'',y'')\in\CR_n(x,y)$ we have 
$$\frac1{C_{T}}|U_n(x,y)|\le |U_n(x'',y'')|\le C_T |U_n(x,y)|.$$
We use this double inequality for $(x',y')$. 
Hence,  $|\CT_{x',y',n}|\le C_T|U_n(x,y)|$ holds.
Finally $U_n(x,y)$ converges to $U$ for a.e. $(x,y)$. It is thus bounded, and the lemma is proved.
\end{proof}

\subsection{Multi-temporal Markov approximation of balls}

\begin{definition}
\label{def-neps-meps}
Let $\eps$ be a positive real number. 

(i) We denote by $n_{\eps}(x,y)$  the largest integer $k$ such that 
$G_k(x,y)\eps\le1$

(ii) we denote by $m_{\eps}(x)$ the largest integer $k$ such that $F_k(x)\eps\le 1$.
\end{definition}

\begin{lemma}\label{lem:FlGn}
There exists some constant $c>0$ such that $c\le F_{m_\eps(x)}(x)\eps\le 1$ and $c\le G_{n_\eps(x,y)}(x,y)\eps\le 1$.
\end{lemma}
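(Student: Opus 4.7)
The plan is to observe that both inequalities are almost immediate from the definitions, with the lower bounds obtained by combining the maximality in the definition of $n_\eps$ and $m_\eps$ with the fact that $T$ is $C^2$ on the compact torus, so the one-step expansion rates are bounded above.

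First I set
\[
M := \max\left(\sup_{x\in\T}|f'(x)|,\ \sup_{(x,y)\in\T^2}\left|\frac{\partial g}{\partial y}(x,y)\right|\right),
\]
which is finite because $f$ and $g$ are $\CC^2$ on a compact manifold. From the recursive definition of $F_n$ and $G_n$ in (\ref{eq:FnGn}) one has, for every integer $k$ and every $(x,y)$,
\[
F_{k+1}(x) = F_k(x)\cdot (f'\circ f^k)(x),\qquad
G_{k+1}(x,y) = G_k(x,y)\cdot \left(\frac{\partial g}{\partial y}\circ T^k\right)(x,y),
\]
so both ratios $F_{k+1}/F_k$ and $G_{k+1}/G_k$ are bounded above by $M$.

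The upper bounds $F_{m_\eps(x)}(x)\eps\le 1$ and $G_{n_\eps(x,y)}(x,y)\eps\le 1$ are exactly Definition~\ref{def-neps-meps}. For the lower bounds, the maximality of $m_\eps(x)$ gives $F_{m_\eps(x)+1}(x)\eps>1$, hence
\[
1 < F_{m_\eps(x)+1}(x)\eps \le M\cdot F_{m_\eps(x)}(x)\eps,
\]
so $F_{m_\eps(x)}(x)\eps \ge 1/M$. The same argument with $G$ yields $G_{n_\eps(x,y)}(x,y)\eps \ge 1/M$. One then takes $c:=1/M$.

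There is no serious obstacle here: the lemma is essentially a bookkeeping statement that says the stopping times $m_\eps,n_\eps$ overshoot the threshold $\eps$ only by a bounded multiplicative factor. The only mild point to mention is that $m_\eps(x)$ and $n_\eps(x,y)$ are finite (which is used implicitly when writing $F_{m_\eps+1}$), but this follows from (\ref{eq:lyap}) since $F_k,G_k\to\infty$ exponentially fast for $\mu_\varphi$-a.e.\ point.
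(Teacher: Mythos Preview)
Your proof is correct and is essentially the paper's own argument spelled out in detail: the upper bounds come straight from Definition~\ref{def-neps-meps}, and the lower bounds from the maximality together with the uniform upper bound on $|f'|$ and $|\partial g/\partial y|$. One minor remark: you invoke \eqref{eq:lyap} for the a.e.\ finiteness of $m_\eps,n_\eps$, but in fact the uniform expansion hypothesis already gives $|f'|,|\partial g/\partial y|>1$ everywhere, so $F_k,G_k\to\infty$ and the stopping times are finite at every point.
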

\begin{proof}
The inequalities follow directly from the definition and the fact that the functions $f'$ and $\frac{\partial g}{\partial y}$ are bounded from above and from below by a positive constant.
\end{proof}

\begin{lemma}\label{lem:mn}
For $\mu_\varphi$ a.e. point we have 
$\disp\lim_{\eps\to0} \frac{n_\eps}{-\log\eps}=\frac{1}{\lambda^{uu}}$ and 
$\disp\lim_{\eps\to0} \frac{m_\eps}{-\log\eps}=\frac{1}{\lambda^{u}}$.
In particular we have $n_{\eps}\ll m_{\eps}$  (as $\eps\to 0$) for $\mu_\varphi$ a.e. $(x,y)$.
\end{lemma}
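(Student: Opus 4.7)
The strategy is to extract the asymptotics of $n_\eps$ and $m_\eps$ directly from the ergodic theorem applied to $\log G_n$ and $\log F_n$, using Lemma~\ref{lem:FlGn} as the quantitative bridge between $\eps$ and these Birkhoff-like products.

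First I would observe that $n_\eps$ and $m_\eps$ are monotone non-increasing in $\eps$ and tend to $+\8$ as $\eps\to 0$. Indeed, since $f'$ and $\partial g/\partial y$ are bounded above by some constant $K$, we have $F_n\le K^n$ and $G_n\le K^n$, so the defining inequalities $F_k\eps\le 1$ and $G_k\eps\le1$ are satisfied for every $k\le \log(1/\eps)/\log K$. Hence $m_\eps,n_\eps\ge \log(1/\eps)/\log K\to\8$.

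Next, taking logarithms in the double inequality from Lemma~\ref{lem:FlGn}, we get, for $\mu_\varphi$-a.e. $(x,y)$,
\[
\log c - \log\eps \;\le\; \log G_{n_\eps(x,y)}(x,y) \;\le\; -\log\eps,
\]
and the analogous bound with $F_{m_\eps(x)}(x)$ in place of $G_{n_\eps}$. Dividing by $n_\eps$ (resp. $m_\eps$), I get
\[
\frac{\log c}{n_\eps}+\frac{-\log\eps}{n_\eps}\;\le\;\frac{\log G_{n_\eps}(x,y)}{n_\eps}\;\le\;\frac{-\log\eps}{n_\eps}.
\]
Now Lemma~\ref{lem-expo-vertifibred} (which is the Birkhoff ergodic theorem applied to $\log|\partial g/\partial y|\circ T^j$) gives $\frac{1}{n}\log G_n\to\lambda^{uu}$ and $\frac{1}{n}\log F_n\to\lambda^u$ $\mu_\varphi$-a.e. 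Since $n_\eps\to\8$ a.e., the a.s. convergence passes to the random subsequence, so $\frac{1}{n_\eps}\log G_{n_\eps}\to\lambda^{uu}$. Sandwiching then forces $-\log\eps/n_\eps\to\lambda^{uu}$, i.e. $n_\eps/(-\log\eps)\to 1/\lambda^{uu}$. The same argument with $F_n$ and $m_\eps$ yields $m_\eps/(-\log\eps)\to 1/\lambda^u$.

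Finally, the ratio $n_\eps/m_\eps$ tends to $\lambda^u/\lambda^{uu}$, which is strictly less than $1$ by the standing assumption $\lambda^u<\lambda^{uu}$. In particular $n_\eps<m_\eps$ for all sufficiently small $\eps$, which is the content of $n_\eps\ll m_\eps$. The only subtlety worth mentioning is the passage from $\frac{\log G_n}{n}\to\lambda^{uu}$ to the same statement along the random sequence $n_\eps$; this is legitimate precisely because the convergence holds along the \emph{full} sequence on a set of full measure and $n_\eps\to\8$ there, so no separate control of $n_\eps$ as a stopping time is needed.
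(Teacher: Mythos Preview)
Your proof is correct and takes essentially the same approach as the paper, which simply states that the lemma is an immediate consequence of Equation~\eqref{eq:lyap} (the ergodic-theorem limits $\frac1n\log F_n\to\lambda^u$, $\frac1n\log G_n\to\lambda^{uu}$) and Lemma~\ref{lem:FlGn}. You have merely unpacked these two ingredients explicitly---the sandwich from Lemma~\ref{lem:FlGn} after taking logarithms, the passage to the random subsequence $n_\eps\to\infty$, and the ratio argument for $n_\eps<m_\eps$ eventually---so there is no substantive difference.
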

\begin{proof}
This is an immediate consequence of Equation~\eqref{eq:lyap} in the proof of Lemma~\ref{lem-expo-vertifibred} and Lemma~\ref{lem:FlGn}.
\end{proof}

\begin{definition}
\label{def:ball}
We define the multi-temporal Markov approximation of a ball by 
$$
 C_{\eps}(x,y):=\CR_{n_{\eps}(x,y)}(x,y)\cap \pi^{-1}(\CP_{m_{\eps}(x)}(x)).
$$ 
\end{definition}
This set is in spirit an approximation of the ball $B((x,y),\eps)$. We shall discuss this fact now. 

\begin{lemma}\label{lem:Tneps} Let $(x,y)$ be fixed in $\T^2$. 
The map $T^{n_\eps(x,y)}$ is one-to-one from $\inte\CR_{n_\eps}\cap\{x\}\times\T$ to $f^{n_\eps}(x)\times(\T\setminus\{y_0\})$.
\end{lemma}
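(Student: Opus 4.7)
The plan is to prove the statement by induction on $n_\eps(x,y)$, using the skew-product structure to reduce everything to tracking how a single vertical fiber is acted upon by one iterate of $T$ inside a single rectangle $R_i$. The final assertion is then obtained by setting $n = n_\eps(x,y)$.

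\medskip

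\noindent\emph{Base case ($n=1$).} Fix $i$ with $(x,y)\in R_i$ and assume $x\in\inte\pi(R_i)$. Since $R_i$ is bounded by vertical lines of the form $f^{-1}(x_0)\times\T$ and by ``horizontal'' curves contained in $T^{-1}(\T\times\{y_0\})$, the slice $R_i\cap\{x\}\times\T$ is a closed arc in the fiber, whose two endpoints lie on $\partial^h R_i$. Because $T(\partial^h R_i)\subset\T\times\{y_0\}$ (Lemma~\ref{lem-partimarkov}), both endpoints map to the single point $(f(x),y_0)$. On the interior of the arc, $T$ is one-to-one (since $T$ is one-to-one on $\inte R_i$) and preserves the fiber structure, sending $\{x\}\times\T$ into $\{f(x)\}\times\T$. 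A continuous injection of an open arc whose closure collapses both endpoints to one point necessarily traces out the full circle minus that point. Therefore
\[
T: \inte R_i \cap \{x\}\times\T \longrightarrow \{f(x)\}\times(\T\setminus\{y_0\})
\]
is a bijection.

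\medskip

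\noindent\emph{Inductive step.} Assume the claim for $n-1$ applied to any starting point. By the Markov property,
\[
\CR_n(x,y)=R_{i_0}(x,y)\cap T^{-1}\bigl(\CR_{n-1}(T(x,y))\bigr),
\]
and since both factors have open interiors one checks
$\inte\CR_n(x,y)=\inte R_{i_0}(x,y)\cap T^{-1}\bigl(\inte\CR_{n-1}(T(x,y))\bigr)$. Intersect with the fiber $\{x\}\times\T$. The base case gives a bijection
\[
T:\inte R_{i_0}(x,y)\cap\{x\}\times\T\longrightarrow\{f(x)\}\times(\T\setminus\{y_0\}).
\]
The key observation is that $(f(x),y_0)$ lies on the horizontal boundary of the base partition, hence is excluded from $\inte\CR_{n-1}(T(x,y))$; consequently
\[
\inte\CR_{n-1}(T(x,y))\cap\{f(x)\}\times\T\ \subset\ \{f(x)\}\times(\T\setminus\{y_0\}),
\]
so the extra restriction imposed by $T^{-1}(\inte\CR_{n-1})$ simply corresponds, after applying $T$, to the subarc $\inte\CR_{n-1}(T(x,y))\cap\{f(x)\}\times\T$. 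The induction hypothesis, applied at $T(x,y)$, sends this subarc bijectively onto $\{f^{n}(x)\}\times(\T\setminus\{y_0\})$ via $T^{n-1}$. Composing gives that $T^n=T^{n-1}\circ T$ is a bijection from $\inte\CR_n(x,y)\cap\{x\}\times\T$ onto $\{f^n(x)\}\times(\T\setminus\{y_0\})$.

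\medskip

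\noindent\emph{Conclusion.} Specializing $n$ to the random integer $n_\eps(x,y)$ (which is a genuine integer for each fixed $(x,y)$, so no measurability issue intervenes) yields exactly the statement of the lemma. The main difficulty, and the one that must be handled cleanly, is the base case: verifying that the two endpoints of the vertical arc $R_i\cap\{x\}\times\T$ both land at $(f(x),y_0)$ and that the continuous fiber map $y\mapsto g(x,y)$ then fills in the entire complementary circle exactly once. Everything afterwards is a routine induction using the Markov decomposition and the fact that $y_0$ is always a boundary value, never an interior one.
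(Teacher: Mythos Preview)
Your base case is correct and nicely argued. The inductive step, however, contains a genuine error. You assert that ``$(f(x),y_0)$ lies on the horizontal boundary of the base partition,'' but by the construction in Lemma~\ref{lem-partimarkov} the boundary of $\CR$ is $T^{-1}(S_0)$: its horizontal part is $\{(x',y'):g(x',y')=y_0\}$, not $\T\times\{y_0\}$. Thus $(f(x),y_0)\in\partial\CR$ would require $g(f(x),y_0)=y_0$, which is nowhere assumed. For instance, with $T(x,y)=(2x,2y+x)$ and $(x_0,y_0)=(0,0)$ the point $(1/4,0)$ lies in the \emph{interior} of an element of $\CR$.

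When $(f(x),y_0)\in\inte\CR_{n-1}(T(x,y))$ your induction actually breaks: the image $T\bigl(\inte\CR_n(x,y)\cap\{x\}\times\T\bigr)$ equals $\bigl(\inte\CR_{n-1}(T(x,y))\cap\{f(x)\}\times\T\bigr)\setminus\{(f(x),y_0)\}$, so after applying $T^{n-1}$ the image in $\{f^n(x)\}\times\T$ misses not only $y_0$ but also the extra point with second coordinate $g_{n-1}(f(x),y_0)$. In this situation the fiber slice $\CR_n(x,y)\cap\{x\}\times\T$ is in fact disconnected, and the conclusion of the lemma, read literally as a bijection onto $\T\setminus\{y_0\}$, fails. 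The paper's one-line proof (``$T^{n}$ is one-to-one from $\inte\CR_{n}$ to $\T^2\setminus S_0$ and preserves vertical fibers'') has the same hidden issue if ``to'' is read as ``onto''. What is really needed downstream (in Lemma~\ref{lem:Cepssum}) is only injectivity on the interior together with the \emph{closed} fiberwise surjectivity $T^{n}\bigl(\CR_n(x,y)\cap\{x\}\times\T\bigr)=\{f^n(x)\}\times\T$, and your inductive scheme does establish that once the faulty claim about $(f(x),y_0)$ is dropped.
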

\begin{proof}
$T^{n_\eps}$ is one-to-one from the interior of the cylinder $\CR_{n_\eps}$ to $\T^2\setminus S_0$ and preserve vertical fibers.
\end{proof}

\begin{lemma}\label{lem:mvt}
There exists a constant $D>0$ such that 
$\diam\CP_{m_\eps}(x)\le D\eps$ and $\diam( \CR_{n_\eps}(x,y)\cap\{x\}\times\T)\le D\eps$.
\end{lemma}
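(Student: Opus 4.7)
The plan is to use standard bounded distortion on full Markov branches together with the bounds from Lemma~\ref{lem:FlGn}. I treat the two estimates separately.

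First, for $\CP_{m_\eps}(x)$: by construction $\CP$ is a Markov partition for the $\CC^2$ uniformly expanding map $f$, and each element of $\bigvee_{k=0}^{m_\eps-1} f^{-k}\CP$ is a full branch, i.e.\ $f^{m_\eps}$ maps $\inte \CP_{m_\eps}(x)$ bijectively onto $\T\setminus\{x_0\}$. By the mean value theorem, and since $f'$ has bounded distortion on Markov cylinders (a classical consequence of $\CC^{1+\alpha}$ regularity of $f$ and the uniform expansion, giving a constant $K_f$ independent of $n$ with $F_n(x')/F_n(x'') \le K_f$ for any $x',x''\in \CP_n(x)$), we get
\[
\diam \CP_{m_\eps}(x) \;\le\; \frac{K_f}{F_{m_\eps}(x)}.
\]
Lemma~\ref{lem:FlGn} then yields $\diam \CP_{m_\eps}(x) \le (K_f/c)\,\eps$.

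Second, for the vertical segment $\CR_{n_\eps}(x,y)\cap \{x\}\times\T$: Lemma~\ref{lem:Tneps} tells us that $T^{n_\eps}$ maps the interior of this segment one-to-one onto $\{f^{n_\eps}(x)\}\times(\T\setminus\{y_0\})$, and the derivative of $T^{n_\eps}$ restricted to vertical fibers is precisely $G_{n_\eps}$. The same bounded distortion argument, now applied to the $\CC^{1+\alpha}$ function $\partial g/\partial y$ along the forward orbit of the vertical fiber (note that the orbit stays inside the Markov cylinder $\CR_{n_\eps}(x,y)$ up to time $n_\eps$ and expansion is uniform in the $y$-direction by the skew-product structure), gives a constant $K_g$ such that $G_{n_\eps}(x,y')/G_{n_\eps}(x,y'') \le K_g$ for $(x,y'), (x,y'')\in \CR_{n_\eps}(x,y)$. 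Integration along the vertical fiber yields
\[
\diam\bigl( \CR_{n_\eps}(x,y)\cap\{x\}\times\T\bigr) \;\le\; \frac{K_g}{G_{n_\eps}(x,y)} \;\le\; \frac{K_g}{c}\,\eps,
\]
again by Lemma~\ref{lem:FlGn}. Setting $D = \max(K_f,K_g)/c$ finishes the proof.

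The only real subtlety is justifying the two bounded-distortion estimates. For $F_n$ this is entirely classical. For $G_n$ along a vertical fiber, one uses that $\partial g/\partial y$ is $\CC^{1+\alpha}$ and uniformly bounded above and below, that the vertical segment $\{x\}\times\T$ is mapped by $T^j$ onto a curve contained in the image $T^j(\CR_{n_\eps}(x,y))$, and that the diameters of these intermediate cylinders shrink geometrically. Summing the resulting Hölder contributions gives a bound independent of $n_\eps$, exactly as in the standard proof of the Gibbs property.
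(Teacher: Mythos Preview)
Your proof is correct and follows essentially the same approach as the paper: both arguments combine the mean value theorem, bounded distortion (for $F_n$ on $\CP_{m_\eps}$ and for $G_n$ along vertical fibers), Lemma~\ref{lem:Tneps}, and Lemma~\ref{lem:FlGn}. You have simply spelled out in more detail what the paper sketches in two sentences.
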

\begin{proof}
The first assertion follows immediately from the mean value theorem, bounded distortion property, and the fact that $f^{m_\eps}$ is one-to-one on $\inte\CP_{m_\eps}$.

For the second one, a vertical segment based on $x$ and contained in $\CR_{n_\eps}(x,y)$ is expanded by $T^{n_\eps}$ by a factor $G_{n_\eps}(x,y')$ by the mean value theorem, for some $y'$ such that $(x,y')$ in $\CR_{n_\eps}(x,y)$. The conclusion follows by bounded distortion property,  Lemmas \ref{lem:FlGn} and \ref{lem:Tneps}.
\end{proof}

\medskip
In the rest of the paper we use vocabulary from the Probability Theory. Namely, we consider random constants and/or random processes. The random part depends on the point $(x,y)$ chosen in $\T^2$ with respect to the law $\mu_{\varphi}$. Constants are constant with respect to the parameter $\eps$. Processes are functions in $t\in[0,1]$.

\begin{lemma}
\label{lem:Cball}
There is a choice of $(x_0,y_0)\in\T^2$ such that the following holds:

There exists a constant $\uc<1$, positive almost everywhere, and a function $\oc_\eps>1$,
satisfying $\oc_\eps=_{0}O(|\log\eps|)$ almost everywhere, such that for any $\eps>0$,
\[
C_{\uc\eps}(x,y) \subset B((x,y),\eps) \subset C_{\oc_\eps\eps}(x,y).
\]
\end{lemma}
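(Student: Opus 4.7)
The plan is to prove the two inclusions separately, interpreting the Markov piece $C_\eps(x,y)$ as a parallelogram of horizontal width $\asymp\eps$ (the diameter of $\CP_{m_\eps}(x)$) and vertical extent $\asymp\eps$ (the vertical fiber length in $\CR_{n_\eps}(x,y)$), with horizontal boundaries that are slightly slanted. First I would choose $(x_0,y_0)$ generic with respect to $\mu_\varphi$, meaning the marginal measure $(\pi)_*\mu_\varphi$ has no atom at $x_0$ (with some quantitative decay of $\mu_\pi(B(x_0,r))$ as $r\to 0$) and the horizontal line $\T\times\{y_0\}$ is not charged; both conditions hold for a full-measure set of choices of $(x_0,y_0)$.

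For the first inclusion $C_{\uc\eps}(x,y)\subset B((x,y),\eps)$: given $(x',y')\in C_{\uc\eps}(x,y)$, the horizontal displacement $|x-x'|$ is at most $\diam\CP_{m_{\uc\eps}}(x)\le D\uc\eps$ by Lemma~\ref{lem:mvt}. For the vertical displacement, the vertical fiber of $\CR_{n_{\uc\eps}}(x,y)$ through $x$ has length at most $D\uc\eps$, and the top/bottom boundaries move by at most $C_{\partial^h}(x,y)\cdot D\uc\eps$ when $x'$ varies in $\CP_{m_{\uc\eps}}(x)$, by Lemma~\ref{lem-slope-border}. Thus $|y-y'|\le D\uc\eps(1+C_{\partial^h}(x,y))$, and choosing $\uc(x,y)=1/(D\sqrt{1+(1+C_{\partial^h}(x,y))^2})$ forces $\|(x,y)-(x',y')\|\le\eps$. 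Since $C_{\partial^h}$ is finite a.e., $\uc>0$ a.e.

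For the second inclusion $B((x,y),\eps)\subset C_{\oc_\eps\eps}(x,y)$, we must verify that the ball fits horizontally inside $\CP_{m_{\oc_\eps\eps}}(x)$ and vertically inside the slanted strip bounded by $\partial^h\CR_{n_{\oc_\eps\eps}}(x,y)$. For the horizontal condition, bounded distortion and the fact that $\partial\CP_k=\bigcup_{1\le j\le k}f^{-j}(x_0)$ give
\[
\textrm{dist}(x,\partial\CP_k)\asymp \min_{1\le j\le k}\frac{d(f^j(x),x_0)}{F_j(x)}.
\]
Using the chosen genericity of $x_0$, a standard Borel--Cantelli argument applied to the $\mu_\pi$-invariance yields a.e.\ a polynomial lower bound $d(f^j(x),x_0)\ge j^{-\beta}$ for some $\beta>0$ and $j$ large. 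Combined with Birkhoff for $F_j(x)\sim e^{j\lambda^u}$, this gives $\textrm{dist}(x,\partial\CP_k)\gtrsim k^{-\beta}e^{-k\lambda^u}$. Requiring this to dominate $\eps$ translates, through the definition of $m_\eps$, into $\log\oc_\eps\gtrsim\beta\log|\log\eps|$, so any $\oc_\eps$ with $\oc_\eps=O(|\log\eps|)$ suffices. The vertical condition is handled analogously, replacing $f^j(x)$ by $T^j(x,y)$ and the target $\{x_0\}$ by the horizontal line $\T\times\{y_0\}$, with the extra horizontal slope induced displacement $C_{\partial^h}(x,y)\cdot\eps$ absorbed into the constant. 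Taking $\oc_\eps$ to be the maximum of the two required values finishes the proof.

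The main obstacle is the Borel--Cantelli step of the second inclusion: one needs a quantitative decay of $\mu_\pi(B(x_0,r))$ and of the $\mu_\varphi$-measure of narrow horizontal tubes around $\T\times\{y_0\}$ in order to obtain a summable series. This uses the Gibbs property of $\mu_\varphi$ and its projection (a Hölder potential on an expanding circle map has Gibbs projection with positive lower local dimension), giving a polynomial tail for the recurrence rate, which is more than enough for the required loose bound $\oc_\eps=O(|\log\eps|)$.
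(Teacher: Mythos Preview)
Your argument follows the same two-part strategy as the paper: the first inclusion via the ``bow tie'' geometry (combining Lemmas~\ref{lem:mvt} and~\ref{lem-slope-border}) is identical, and the second inclusion via a Borel--Cantelli control of the orbit's distance to the partition boundary is the same idea, except that the paper secures the needed decay $\mu_\varphi(B(\partial\CR,r))\le a r$ by an elementary pigeonhole choice of $(x_0,y_0)$ rather than by invoking the positive local dimension of the Gibbs projection. One minor slip: your Borel--Cantelli exponent necessarily satisfies $\beta>1/\gamma\ge 1$ (where $\gamma$ is the local-dimension exponent you obtain), so the honest conclusion is $\oc_\eps=O(|\log\eps|^\beta)$ rather than $O(|\log\eps|)$; this is harmless, since only $\log\oc_\eps=o(|\log\eps|^{1/4})$ is used downstream (Remark~\ref{rem-olc-bunded-above}).
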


\begin{proof}
Let $(x',y')\in C_\eps(x,y)$. By the first assertion of Lemma~\ref{lem:mvt} we have $d(x,x')\le D\eps$. 

It follows immediately the second assertion of Lemma~\ref{lem:mvt} and Lemma~\ref{lem-borders} that $C_\eps(x,y)$ is included in a ``bow tie'' of vertical size less than $D\eps+2C_{\partial^h}(x,y)D\eps$ (see Figure~\ref{fig-bowtie}).
Hence for any $\eps>0$ we have
\[
C_\eps(x,y) \subset B((x,y),2D(1+C_{\partial^h}(x,y))\eps).
\]
Set $\disp \ul c:=\frac1{2D(1+C_{\partial^h}(x,y))}$. We have just proved that
$\disp C_{\ul c\eps}(x,y)\subset B((x,y),\eps)$ holds.

\begin{figure}[htbp]
\begin{center}
\includegraphics[scale=0.5]{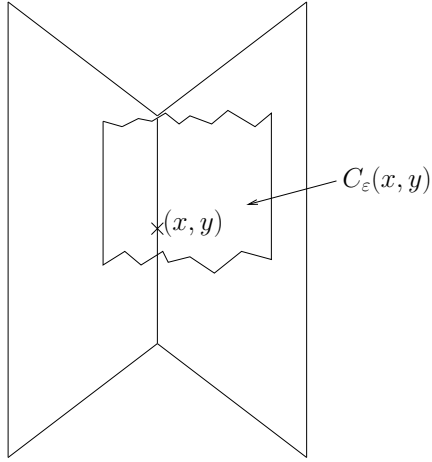}
\caption{The Markov approximation of the ball contained inside a Bow tie}
\label{fig-bowtie}
\end{center}
\end{figure}

\medskip
To get the other inclusion we need to control the distance between a point $(x,y)$ and the border of $C_{\eps}(x,y)$.

We claim that it is possible to choose $x_0$ and $y_0$ such that 
$$
\mu_\varphi(B(\partial \CR,r))\le ar,\quad \forall r>0
$$
where $a=8\|DT\|_\infty$.

Indeed, since $\mu_\varphi$ is a probability measure, there exist $x_0$ and $y_0$ such that 
$\mu_\varphi(B(x_0,r)\times\T )\le 4r$ and  for all $r$, $\mu_\varphi(\T\times B(y_0,r))\le 4r$ (see~\cite{Saussol-rapidmixing}, proof of Lemma~3 for details). 

We have 
$B(\partial \CR,r) = B(T^{-1}S_0,r) \subset T^{-1}B(S_0,\|DT\|_\infty r)$.
Hence by invariance of the measure we get $\mu_\varphi(B(\partial \CR,r))\le\mu_\varphi(B(S_0,\|DT\|_\infty r))\le ar$.

\medskip
Now, we show that for $\mu_{\varphi}$-almost every point  the orbit does not approach the border $\partial\CR$ too ``quickly'' . 

By Borel Cantelli Lemma and the invariance of $\mu_\varphi$ the claim implies that there exists $N=N(x,y)$, finite a.e., such that for any $n\ge N$ we have $d(T^n(x,y),\partial\CR)>1/n^2$.
In addition, the distance $d_N(x,y):=d((x,y),\partial \CR_N)$ is a.e. non zero since $\cup_{n=0}^{N}T^{-n}S_0$ has zero measure.

Note that $DT^n=\left(\begin{matrix} F_n & 0\\ -G_n U_n & G_n\end{matrix}\right)$.
Hence for $\mu_\varphi$-a.e. $(x,y)$ we have 
\[
\sup_{\CR_n(x,y)}|DT^n|\le \kappa(x,y)|G_n(x,y)|
\]
for some constant $\kappa>1$ finite a.e..

Let $\disp\rho_n=\frac{1}{n^2\kappa|G_n|}$. Let $n$ so large that $\rho_n<d_N(x,y)$. By induction we have that $B((x,y),\rho_n)\subset \CR_n(x,y)$. Indeed, suppose that for some $N\le k\le n-1$ we have 
$B((x,y),\rho_n)\subset \CR_{k}(x,y)$.
Since the image $T^kB((x,y),\rho_n)$ is contained in the ball $B((x,y),\kappa|G_k|\rho_n)$, which does not intersect the boundary $\partial \CR$, we get that $B((x,y),\rho_n)\subset \CR_{k+1}(x,y)$.

Taking $n=n_\eps$ (when $\eps$ is sufficiently small) we get that 
\[
B((x,y),\rho_{n_\eps})\subset \CR_{n_\eps}(x,y).
\]

\smallskip
A similar and easier argument applied to the one-dimensional map $f$ and the partition $\CP$ gives that for some sequence, say, $\disp\rho_m'=\frac{1}{m^2\kappa'|F_m|}$ we have
\[
B(x,\rho_{m_\eps}')\subset \CP_{m_\eps}(x).
\]

\medskip

Putting together these two inclusions, for any $\eps>0$ sufficiently small we get that

\begin{equation}
\label{equ1-inclus-cbareps}
B((x,y),\min(\rho_{n_\eps},\rho_{m_\eps}')) \subset C_\eps(x,y).
\end{equation}

To get the last inclusion, we rewrite \eqref{equ1-inclus-cbareps} with a variable $\alpha$ instead of $\eps$:
$$B((x,y),\min(\rho_{n_\alpha},\rho_{m_\alpha}')) \subset C_\alpha(x,y).
$$
Now, we want to inverse the expression in $\alpha$ and $\eps$: for a given $\eps$, 
there is $\alpha$ such $\min(\rho_{n_\alpha},\rho_{m_\alpha}')=\eps$. Hence 
$$B((x,y),\eps)\subset C_{\ol c_{\eps}.\eps}(x,y)$$
holds if we set $\ol c_{\eps}=\disp\frac\alpha\eps$. 

Note that we can always assume that the constant $\kappa$ and $\kappa'$ are bigger than 1. Hence, Lemma \ref{lem:FlGn} yields that $\alpha$ is (much) bigger than $\eps$. This shows that $n_{\eps}(x,y)$ and $m_{\eps}(x)$ are respectively bigger than $n_{\alpha}(x,y)$ and $m_{\alpha}(x)$. 

Assuming, for instance, that $\rho_{\alpha}=\eps$, we get 
$$\ol c_{\eps}=n^2_{\alpha}\kappa|G_{m_{\alpha}}|\alpha.$$
Again, we use Lemma \ref{lem:FlGn}, and then Lemma \ref{lem:mn} to get 
$$\ol c_{\eps}\le \wt\kappa(x,y)|\log\eps|,$$
for some constant $\wt\kappa$ \emph{a.e.} finite. 
\end{proof}

\begin{remark}
\label{rem-olc-bunded-above}
A direct consequence of Lemma \ref{lem:Cball} is that $\disp\frac{\log\ol c_{\eps}}{|\log^\frac14\eps|}$ is bounded from above when $\eps$ describes $[0,\frac12]$. 
\end{remark}

\subsection{The projected measure $\nu_{\varphi}$ is a Gibbs measure}\label{subsec-projection}
We define the projected measure $\nu_{\varphi}=\pi_*\mu_\varphi$ on $\T$ by 
$$\nu_{\varphi}(A):=\mu_{\varphi}(A\times \T).$$
As $T$ is a fibred map on $\T^2$ the measure $\nu_{\varphi}$ is $f$-invariant.  The goal of this subsection is to prove that $\nu_{\varphi}$ is a Gibbs measure. 

This comes from \cite{Chazottes-Ugalde-projmarko2}:

\begin{definition}[Amalgamation map]
\label{def-almagation}
Let $A, B$ be two finite alphabets, with $Card(A) > Card(B)$, 
and $\pi: A\rightarrow B$ be a surjective map (amalgamation) which extends to the map 
$ \pi:A^\N\rightarrow B^{\N}$ (we use the same letter for both) such that $(\pi \mathbf {a})_{n} = \pi(\mathbf{a}_{n} )$ for all $n\in\N$. The map $\pi$ is continuous and shift-commuting, i.e. it is a factor map from $A^\N$ onto $B^\N$. 
\end{definition}

We remind that the  the variation is \label{def-variation-n}$\var_n\phi=\sup_C\sup_{x,y\in C}|\phi(x)-\phi(y)|$ where the supremum is taken among all the cylinders $C$ of rank $n$.

\begin{theorem}[Chazottes-Ugalde]
\label{theo-projmarko}
Let $ \pi:A^\N\rightarrow B^{\N}$ be the amalgamation map just defined and 
$ \varphi:A^\N\rightarrow \R$ be a potential with exponentially decaying variation: $\var_{n}(\varphi)\in O(e^{-qn})$, for some $q>0$. Then the 
measure $\mu_{\varphi}\circ \pi^{-1}$ is a Gibbs measure with support $B^\N$, for a potential $\psi:B^\N\rightarrow \R$ with stretched exponential variation: $\var_{n}(\psi)\in O(e^{-c\sqrt{n}})$ for some $c>0$.
\end{theorem}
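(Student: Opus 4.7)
My plan is to follow the classical strategy for showing that a factor of a Gibbs measure is again Gibbs: construct a candidate potential $\psi$ on $B^\N$ as a Jacobian with respect to the shift, then estimate its variation.

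First, set $\mu=\mu_{\varphi}\circ\pi^{-1}$. The natural candidate is
$$\psi(b):=\lim_{n\to\infty}\log\frac{\mu([b_0\ldots b_n])}{\mu([b_1\ldots b_n])}.$$
Using the Gibbs property of $\mu_{\varphi}$ and the identity $\mu([b_0\ldots b_n])=\sum_{\pi(a_i)=b_i}\mu_{\varphi}([a_0\ldots a_n])$, this ratio rewrites, up to Gibbs constants, as a ratio of sums of exponential Birkhoff weights $e^{S_{n+1}\varphi(a)}$ over the preimage fibers of $\pi$.

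I would proceed in three steps. (i) Show that the limit exists and is continuous, by exploiting the exponential decay of $\var_j\varphi$ to compare consecutive approximating ratios. (ii) Verify that $\mu$ is Gibbs for $\psi$, by a telescoping argument in which the partial products reproduce the cylinder measures $\mu([b_0\ldots b_n])$ up to a uniformly bounded multiplicative error. (iii) Estimate $\var_n\psi$: introduce a truncation $\psi^{(k)}(b)$ which keeps only information up to depth $k$ when summing over preimages. On the one hand, the exponential decay of $\var_j\varphi$ gives an approximation bound $\|\psi-\psi^{(k)}\|_\infty = O(e^{-qk})$. On the other hand, $\var_n\psi^{(k)}$ is controlled by how much the length-$k$ fiber sum can be perturbed by modifying $b$ past coordinate $n$, producing a bound of the form $e^{Ck}\cdot e^{-qn}$, where the factor $e^{Ck}$ accounts for the combinatorial size of preimage trees of depth $k$. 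Combining,
$$\var_n\psi\leq C_1 e^{-qk}+C_2 e^{Ck-qn},$$
and the balance is optimal for $k\sim\sqrt{n}$, giving $\var_n\psi=O(e^{-c\sqrt{n}})$.

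The main obstacle is precisely the combinatorial balancing in step (iii): different branches of the preimage tree under the amalgamation $\pi$ carry Gibbs weights of very different magnitudes, and obtaining a stretched-exponential (rather than merely summable) rate demands a careful comparison between the perturbed and unperturbed fibers. This is the heart of the Chazottes--Ugalde argument and what prevents the naive exponential decay one might hope for at first.
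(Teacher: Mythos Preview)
The paper does not prove this theorem. It is quoted as a result of Chazottes and Ugalde, with a citation to \cite{Chazottes-Ugalde-projmarko2}; the authors simply invoke it to deduce Proposition~\ref{lem-nuphi-gibbs}. So there is no ``paper's own proof'' to compare against: the paper's argument for this statement is a reference.

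Your sketch is a reasonable outline of the actual Chazottes--Ugalde strategy (define $\psi$ as the log-Jacobian of the factor measure, then control its variation by a truncation/balancing argument). Two caveats if you intend to flesh it out. First, the bound you write in step~(iii), $\var_n\psi^{(k)}\le e^{Ck}e^{-qn}$, is too optimistic as stated: if it held literally one could take $k$ constant and recover exponential decay. The genuine estimate has the exponential factor in $k$ appearing in a way that forces $k\asymp\sqrt{n}$ to be the only viable balance; getting this right requires tracking how the Gibbs weights on the fiber tree depend on $b_0\ldots b_n$, not just counting branches. Second, the existence of the limit defining $\psi$ and the Gibbs property of $\mu$ for $\psi$ are not automatic from the exponential decay of $\var_j\varphi$ alone; one uses the transfer-operator representation of the conditional measures and a contraction argument on the fibers. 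You correctly flag this as the heart of the matter, but the proposal as written is a plan rather than a proof.
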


Using our vocabulary and our notation we get:

\begin{proposition}
\label{lem-nuphi-gibbs}
There exists a function $\psi$ which satisfies

(i) the variation of $\psi$ is stretched exponential.

(ii) the measure $\nu_{\varphi}$ is a Gibbs measure for $(\T,f)$ associated to the potential $\psi$.
\end{proposition}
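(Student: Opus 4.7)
The plan is to encode the dynamics symbolically via the Markov partition furnished by Lemma \ref{lem-partimarkov}, and then deduce the proposition as a direct application of Theorem \ref{theo-projmarko}. First I would use the partition $\CR=\{R_1,\ldots,R_N\}$ to code $(\T^2,T,\mu_\varphi)$: since $T(R_i)=\T^2$ for each $i$, the itinerary $(x,y)\mapsto(a_n)_{n\ge 0}$ defined by $T^n(x,y)\in R_{a_n}$ identifies $\T^2$, up to the null set $\bigcup_n T^{-n}\partial\CR$, with the full one-sided shift on the alphabet $A=\{1,\ldots,N\}$. The H\"older continuity of $\varphi$ combined with the uniform expansion of $T$ produces a symbolic potential $\widetilde\varphi:A^\N\to\R$ of exponentially decaying variation, whose equilibrium state corresponds to $\mu_\varphi$.

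Next I would encode the base dynamics in the same way. By Lemma \ref{lem-partimarkov}(4), $\CP=\pi(\CR)$ is a Markov partition of $(\T,f)$, and each element of $\CP$ is mapped onto $\T$ by $f$, so the itinerary map identifies $(\T,f)$ with the full shift on $B:=\pi(\CR)$. Property (2) of Lemma \ref{lem-partimarkov} ensures that $R_i\mapsto\pi(R_i)$ is a well defined surjection $A\to B$. Since $T$ is a skew product, $\pi\circ T=f\circ\pi$, so through the two codings the geometric projection $\pi:\T^2\to\T$ corresponds exactly to the letterwise extension $\widetilde\pi:A^\N\to B^\N$, which is therefore an amalgamation in the sense of Definition \ref{def-almagation}.

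With this symbolic setup in place, Theorem \ref{theo-projmarko} applies directly to $\widetilde\varphi$ and yields that $\widetilde\pi_*\mu_{\widetilde\varphi}$ is a Gibbs measure on $B^\N$ for some potential $\widetilde\psi$ with $\var_n\widetilde\psi=O(e^{-c\sqrt n})$. Pulling $\widetilde\psi$ back to $\T$ via the coding of the base provides a function $\psi:\T\to\R$ whose variation on the elements of $\bigvee_{k=0}^{n-1}f^{-k}\CP$ decays like $e^{-c\sqrt n}$ and for which $\nu_\varphi=\pi_*\mu_\varphi$ is the Gibbs measure associated to $\psi$.

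The main technical point, though not deep, is the dictionary between the geometric and the symbolic pictures: one must verify that the Markov partition built in Lemma \ref{lem-partimarkov} satisfies the usual Bowen hypotheses (so that H\"older regularity on $\T^2$ transfers to exponentially decaying symbolic variation, using uniform expansion crucially), and that the relevant boundary sets carry no $\mu_\varphi$-mass so the conjugacies hold on full measure sets. Once these verifications are in place, the proposition is essentially a restatement of Chazottes--Ugalde for our concrete system.
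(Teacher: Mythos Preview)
Your proposal is correct and follows exactly the route the paper takes: the paper simply states the Chazottes--Ugalde theorem (Theorem~\ref{theo-projmarko}) and asserts that, ``using our vocabulary and our notation,'' the proposition follows. Your write-up in fact supplies more detail than the paper does, spelling out the symbolic coding via $\CR$ and $\CP$, the identification of $\pi$ with the amalgamation map, and the boundary/regularity checks needed to pass between the geometric and symbolic pictures.
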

\begin{remark}
\label{rem-pressionproj}
Without loss of generality we set the pressure of $\psi$ with respect to $(\T,f)$ to zero.
In particular we have $h_{\nu_\varphi}(f)=-\int\psi\circ\pi d\mu_\varphi$.

\end{remark}

\subsection{The measure of balls as Birkhoff sums}

For two random variables $a_\eps$ and $b_\eps$ we use the notation 
$a_\eps\approx b_\eps$ to mean that there exists a constant random variable $c<\infty$ a.e. such that $|a_\eps-b_\eps|\le c$ for any $\eps$.

Let us recall the definition of the main process
\[
N_\eps(t)=\frac{\log\mu_\varphi(B((x,y),\eps^t))-t\delta\log\eps}{\sqrt{-\log\eps}},
\quad t\in[0,1].
\]
By regularity of the measure, $N_\eps$ is cadlag\footnote{Presumably $N_{\eps}(t)$ is even continuous. However, the proof of that fact  would need more space than the margin allows us.}. We want to show the convergence of $N_\eps$ for the Skorohod topology on $[0,1]$.

\bigskip
Now, we define another process
$$
\quad N_\eps'(t)=\frac{\log\mu_{\varphi}(C_{\eps^t}(x,y))-t\delta\log\eps}{\sqrt{-\log\eps}},
\quad t\in[0,1].
$$
\begin{lemma}\label{lem-N-Nprime}
If the process $N_\eps'$ converges in distribution on $\CD([0,1])$ to a Wiener process of variance $\sigma^2$ then $N_\eps$ converges in distribution to the same process.
\end{lemma}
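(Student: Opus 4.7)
The plan is to show that the processes $N_\eps$ and $N_\eps'$ differ by a quantity which tends to zero almost surely, uniformly in $t\in[0,1]$. Combined with the fact that Skorohod convergence to a continuous limit (such as $\sigma W$) is actually equivalent to uniform convergence, this will immediately give the conclusion.

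First, I apply Lemma~\ref{lem:Cball} to the radius $\eps^t$: almost surely, for every $\eps>0$ and every $t\in[0,1]$,
\[
\mu_\varphi\bigl(C_{\uc\eps^t}(x,y)\bigr) \le \mu_\varphi\bigl(B((x,y),\eps^t)\bigr) \le \mu_\varphi\bigl(C_{\oc_{\eps^t}\eps^t}(x,y)\bigr).
\]
Rewrite $\uc\eps^t = \eps^{\alpha_\eps(t)}$ and $\oc_{\eps^t}\eps^t = \eps^{\beta_\eps(t)}$, so that
\[
\alpha_\eps(t) = t + \frac{|\log\uc|}{|\log\eps|}, \qquad \beta_\eps(t) = t - \frac{\log\oc_{\eps^t}}{|\log\eps|}.
\]
Since $\uc>0$ a.e., the quantity $|\alpha_\eps(t)-t|$ is bounded by $|\log\uc|/|\log\eps|$ uniformly in $t$, which goes to $0$. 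By Remark~\ref{rem-olc-bunded-above}, $\log\oc_{\eps^t} = O(|\log\eps|^{1/4})$, hence $|\beta_\eps(t)-t| = O(|\log\eps|^{-3/4}) \to 0$ uniformly in $t$ (truncating $\alpha_\eps(t)$ at $1$ when necessary changes nothing since $N_\eps'$ is defined on $[0,1]$).

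Second, taking logarithms in the sandwich and dividing by $\sqrt{-\log\eps}$, a direct computation yields
\[
N_\eps'(\alpha_\eps(t)) - \delta(\alpha_\eps(t)-t)\sqrt{-\log\eps} \;\le\; N_\eps(t) \;\le\; N_\eps'(\beta_\eps(t)) + \delta(t-\beta_\eps(t))\sqrt{-\log\eps}.
\]
The correcting terms are bounded a.s.\ by $\delta|\log\uc|/\sqrt{|\log\eps|}$ and by $\delta\log\oc_{\eps^t}/\sqrt{|\log\eps|} = O(|\log\eps|^{-1/4})$, hence both tend to $0$ uniformly in $t$ a.s.

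Finally, by hypothesis $N_\eps' \Rightarrow \sigma W$ in $\CD([0,1])$. Since the limit $\sigma W$ is a.s.\ continuous, Skorohod convergence upgrades to convergence in uniform norm in distribution (see \cite{billingsley}). Using the Skorohod representation theorem, we may work on a probability space on which $N_\eps' \to \sigma W$ uniformly almost surely. As $\alpha_\eps, \beta_\eps \to \mathrm{id}$ uniformly and $\sigma W$ is uniformly continuous on $[0,1]$, the compositions $N_\eps'\circ\alpha_\eps$ and $N_\eps'\circ\beta_\eps$ still converge uniformly to $\sigma W$ almost surely. The sandwich then forces $N_\eps \to \sigma W$ uniformly in distribution, in particular in $\CD([0,1])$.

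The only delicate point is the uniform control of the correcting terms involving $\oc_{\eps^t}$, for which the sharper bound of Remark~\ref{rem-olc-bunded-above} (rather than the weaker $\oc_\eps = O(|\log\eps|)$ of Lemma~\ref{lem:Cball}) is essential to beat the divisor $\sqrt{|\log\eps|}$. The remaining steps (time-reparameterization into $[0,1]$, passage from uniform a.s.\ convergence to convergence in distribution in $\CD([0,1])$) are routine.
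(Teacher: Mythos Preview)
Your overall strategy --- sandwiching $N_\eps$ between two time-reparameterized copies of $N_\eps'$, with the time changes $\alpha_\eps,\beta_\eps$ converging uniformly to the identity --- is exactly the paper's approach. However, two points in your execution are genuine gaps that the paper handles with extra care.

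\textbf{Boundary issues.} Your bound $\log\oc_{\eps^t}=O(|\log\eps|^{1/4})$ from Remark~\ref{rem-olc-bunded-above} is only available when $\eps^t\le 1/2$; for $t$ close to $0$ this fails and $\beta_\eps(t)$ is uncontrolled (it may even be negative). Dually, for $t$ close to $1$ your $\alpha_\eps(t)$ may exceed $1$, and truncating at $1$ does \emph{not} preserve the lower sandwich inequality: replacing $\alpha_\eps(t)$ by $\min(\alpha_\eps(t),1)$ enlarges $C_{\eps^{\alpha}}$, so the inclusion $C_{\eps^\alpha}\subset B((x,y),\eps^t)$ is lost. The paper deals with both issues by first using the scale invariance $N_\eps(t)=\sqrt{2}N_{\eps^2}(t/2)$ to reduce to convergence on $[0,1/2]$ (so the upper time shift stays inside $[0,1]$), and by treating the range $t\in[0,\log^{-5/8}(1/\eps))$ separately with the trivial bound $\mu_\varphi(B)\le 1$.

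\textbf{The Skorohod representation step.} As written, the argument mixes probability spaces: you move $N_\eps'$ to a new space to upgrade to a.s.\ uniform convergence, but the sandwich $N_\eps'(\alpha_\eps)-c_\eps\le N_\eps\le N_\eps'(\beta_\eps)+c_\eps'$ is a pathwise inequality on the \emph{original} space $(\T^2,\mu_\varphi)$, where $N_\eps$, $\alpha_\eps$, $\beta_\eps$ live. On the Skorohod space there is no $N_\eps$, so ``the sandwich then forces $N_\eps\to\sigma W$'' has no content there. One can repair this by applying Skorohod representation to the tuple $(N_\eps',\alpha_\eps,\beta_\eps)$ (the last two have deterministic limits, so joint convergence follows from Slutsky), deducing that $N_\eps'\circ\alpha_\eps$ and $N_\eps'\circ\beta_\eps$ both converge in distribution to $\sigma W$ and that their difference tends to $0$ in probability --- but then a separate sandwich lemma for weak convergence is still required. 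The paper avoids Skorohod entirely: it bounds the gap $V_\eps-U_\eps$ by the oscillation $v(N_\eps',q_\eps)$ with $q_\eps\to 0$, proves directly that this oscillation tends to $0$ in probability (using that the limit $\sigma W$ is continuous and that the $\CD$-closure of $\{v(w,q)>r\}$ is contained in $\{v(w,3q)>r/3\}$), and concludes by Slutsky's theorem on the original space.
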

\begin{proof}
Observe that the process $N_\eps$ has the scale invariance
\[
N_{\eps}(t)=\sqrt{2}N_{\eps^2}(t/2),\quad \forall t\in[0,1].
\]
Since the Wiener process itself has the same scale invariance, and the mapping $w(\cdot)\mapsto \sqrt{2}w(\cdot/2)$ is continuous, it is sufficient to prove the convergence in distribution of the process $N_\eps$ on $\CD([0,1/2])$.

Let $\uc$ and $\oc_\eps$ given by Lemma~\ref{lem:Cball}.
For any $\eps<1/e^4$, on the set $\Omega_\eps^0:=\{\log\uc\ge -\log^{1/4}\frac1\eps\}$ and for any $t\le 1/2$ we have
\[
\begin{split}
N_{\eps}(t)
&\ge
\frac{\log\mu_\varphi(C_{\uc \eps^t}(x,y))-t\delta\log \eps}{\sqrt{-\log \eps}} \\
&\ge
\frac{\log\mu_\varphi(C_{\exp(-\log^{1/4}\frac1\eps) \eps^t}(x,y))-t\delta\log \eps}{\sqrt{-\log \eps}} \\
&\ge
N_{\eps}'(t+\log^{-3/4}\frac1\eps)-\delta \log^{-1/4}\frac1\eps =:U_\eps(t)
\end{split}
\]
since $\exp(-\log^{1/4}\frac1\eps)=\eps^{\log^{-3/4}\frac1\eps}$.

On the other hand, on the set  $\Omega_\eps^1=\{\log\oc_\eta\le\log^{1/8}\frac1\eps \log^{1/4}\frac1\eta,\forall \eta\in(0,\frac12)\}$, and for any $t\in[\log^{-5/8}\frac1\eps,1/2]$ we have
\[
\begin{split}
N_\eps(t)
&\le\frac{\log\mu_\varphi(C_{\oc_{\eps^t}\eps^t}(x,y))-\delta\log \eps}{\sqrt{-\log \eps}} \\
&\le
N_{\eps}'(t-\log^{-5/8}\frac1\eps)+\delta \log^{-1/8}\frac1\eps
\end{split}
\]
since\footnote{For $\eps<e^{-4}$ and for $t>\log^{-5/8}\frac1\eps$, $\eps^{t}\le e^{-4^{3/8}}=0.186..<\frac12$.} $\oc_{\eps^t}\le\exp(\log^{1/8}\frac1\eps \log^{1/4}\frac{1}{\eps^t}) \le \eps^{-\log^{-5/8}\frac1\eps}$.
Note in addition that for $t\in[0,\log^{-5/8}\frac1\eps)$, since $\mu$ is a probability measure, it trivially holds the upper bound 
\[
N_\eps(t)\le \frac{0-t\delta\log\eps}{\sqrt{-\log\eps}}\le\delta\log^{-1/8}\frac1\eps.
\]
Define 
\[
V_\eps(t) := \delta \log^{-1/8}\frac1\eps + \begin{cases} N_{\eps}'( t-\log^{-5/8}\frac1\eps) & \text{if } t\ge \log^{-5/8}\frac1\eps\\
0&\text{otherwise}
\end{cases}.
\]
For any $\eps<1/e^2$, on $\Omega_\eps^0\cap\Omega_\eps^1$ we have the bound on $[0,1/2]$:
\[
U_\eps\le N_\eps\le V_\eps.
\]

The measure of $\Omega_\eps^0\cap\Omega_\eps^1$ goes to $1$ (see Remark \ref{rem-olc-bunded-above} page \pageref{rem-olc-bunded-above}), and both $U_\eps$ and $V_\eps$ converge in distribution to the same process. We can now conclude the proof\footnote{The conclusion could follow from the sandwich theorem. However, a version for processes is not widely known, therefore we prove it directly in our case.}:

Denote, for any $q>0$, the oscillation of a function $w\in\CD([0,1])$ by $v(w,q)=\sup_{|t-s|<q} |w(t)-w(s)|$. We have 
\[
Z_\eps:= V_\eps-U_\eps \le v(N_\eps',2\log^{-5/8}\frac1\eps)+2\delta \log^{-1/8}\frac1\eps.
\]
Since $N_\eps'$ converges in distribution to a Wiener process $W$, which is continuous, we claim that the oscillation $v(N_\eps',2\log^{-5/8}\frac1\eps)$ converges to zero in probability: 

let $r>0$. Since $W$ is almost surely uniformly continuous, there exists $q>0$ such that $P(v(W,3q)>r/3)<r$. Let $A(q,r)=\{w\in \CD\colon v(w,q)>r\}$. The closure of $A(q,r)$ in the Skorohod topology is trivially contained in $A(3q,r/3)$. 
Moreover, the weak convergence of the measures $P_{N_\eps'}$ to $P_W$ implies 
$$\limsup_{\eps\to0} P_{N_\eps'}(A(q,r))\le P_{W}(A(3q,r/3))\le r.$$
Therefore, there exists $\eps_0$ such that for any $\eps<\eps_0$ we have
$P(v(N_\eps',q)>r)\le r+r$. Let $\eps_1<\eps_0$ such that $2\log^{-5/8}\frac1{\eps_1}<q$.
For any $\eps<\eps_1$ we have 
\[
P(v(N_\eps',2\log^{-5/8}\frac1\eps)>r)\le 2r.
\]
This proves the convergence in probability. 

By Slutsky theorem, $N_\eps$ also converges in distribution to the Wiener process.

\end{proof}
Therefore it suffices to show the convergence in distribution of the process $(N_\eps'(t))_{t\in[0,1]}$.
The key lemma below relates the measure of the multi-temporal Markov approximation of the ball with a non-homogeneous Birkhoff sum. This is where we use the skew product structure and the Gibbs property of the measure and its projection.
\begin{lemma}\label{lem:Cepssum}
For $\mu_\varphi$ a.e. $(x,y)$ we have 
\[
\log \mu_\varphi( C_\eps(x,y) ) \approx S_{n_\eps(x,y)}(\varphi-\psi\circ\pi)(x,y)+S_{m_\eps(x,y)}(\psi\circ\pi)(x,y)
\]

\end{lemma}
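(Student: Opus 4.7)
The strategy is to use the skew-product structure of $T$ together with the Gibbs property of $\mu_\varphi$ on $(\T^2,\CR,T)$ and the Gibbs property of the projected measure $\nu_\varphi=\pi_*\mu_\varphi$ on $(\T,\CP,f)$ established in Proposition~\ref{lem-nuphi-gibbs}. The set $C_\eps(x,y)$ is the intersection of a ``vertical'' cylinder of depth $n_\eps$ with a ``horizontal'' cylinder of depth $m_\eps$; iterating by $T^{n_\eps}$ unfolds the vertical cylinder onto all of $\T^2$, leaving only the horizontal constraint, truncated by $n_\eps$ iterates of $f$.

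First I would identify the set $T^{n_\eps}(C_\eps(x,y))$. Since $T$ is a skew product we have $\pi\circ T^{n_\eps}=f^{n_\eps}\circ\pi$, and since $m_\eps\ge n_\eps$ a.e.\ for small $\eps$ (Lemma~\ref{lem:mn}), the Markov property of $\CP$ yields, up to $\mu_\varphi$-null boundaries,
\[
T^{n_\eps}(C_\eps(x,y))=\pi^{-1}\bigl(\CP_{m_\eps-n_\eps}(f^{n_\eps}(x))\bigr).
\]
The Gibbs property of $\mu_\varphi$ (with pressure normalized to zero) gives, for any measurable $A\subset\CR_{n_\eps}(x,y)$, a two-sided uniform bound
\[
c_1 e^{S_{n_\eps}\varphi(x,y)}\mu_\varphi(T^{n_\eps}A)\le\mu_\varphi(A)\le c_2 e^{S_{n_\eps}\varphi(x,y)}\mu_\varphi(T^{n_\eps}A).
\]
Applying this to $A=C_\eps(x,y)$ and using $\mu_\varphi(\pi^{-1}E)=\nu_\varphi(E)$ shows that $\mu_\varphi(C_\eps(x,y))$ is comparable, up to uniform multiplicative constants, to $e^{S_{n_\eps}\varphi(x,y)}\nu_\varphi(\CP_{m_\eps-n_\eps}(f^{n_\eps}(x)))$.

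Next, the Gibbs property of $\nu_\varphi$ given by Proposition~\ref{lem-nuphi-gibbs} (with pressure zero by Remark~\ref{rem-pressionproj}) gives that $\nu_\varphi(\CP_{m_\eps-n_\eps}(f^{n_\eps}(x)))$ is comparable, again up to uniform multiplicative constants, to
\[
\exp\Bigl(\sum_{k=n_\eps}^{m_\eps-1}\psi(f^k(x))\Bigr)=\exp\bigl(S_{m_\eps}(\psi\circ\pi)(x,y)-S_{n_\eps}(\psi\circ\pi)(x,y)\bigr),
\]
since the Birkhoff sum for $T$ of $\psi\circ\pi$ coincides with the Birkhoff sum for $f$ of $\psi$. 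Combining the two estimates and taking logarithms, the uniform multiplicative constants become additive bounded terms absorbed into the $\approx$ relation, yielding the announced identity. The main subtlety is that $\psi$ has only stretched exponential variation rather than H\"older regularity; this prevents a direct Gibbs argument but is precisely what the Chazottes--Ugalde theorem quoted as Proposition~\ref{lem-nuphi-gibbs} delivers. Given that input, the remainder is essentially bookkeeping: aligning the time windows $n_\eps$ and $m_\eps$ and checking that the set-theoretic identities above hold up to $\mu_\varphi$-null sets.
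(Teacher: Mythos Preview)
Your proof is correct and follows essentially the same route as the paper's: identify $T^{n_\eps}C_\eps(x,y)=\pi^{-1}\bigl(\CP_{m_\eps-n_\eps}(f^{n_\eps}x)\bigr)$ via the skew-product and Markov structure (using $m_\eps\ge n_\eps$ eventually a.e.), then apply the conformal/Gibbs property of $\mu_\varphi$ with bounded distortion, and finally the Gibbs property of $\nu_\varphi$. The only cosmetic difference is that the paper phrases the first step via the $e^{-\varphi}$-conformality of $\mu_\varphi$ and an explicit integral identity, whereas you package it directly as a two-sided bound for arbitrary measurable $A\subset\CR_{n_\eps}(x,y)$; these are equivalent.
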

\begin{proof}
Remind that
$\disp C_{\eps}(x,y):=\CR_{n_{\eps}(x,y)}(x,y)\cap \pi^{-1}(\CP_{m_{\eps}(x)}(x))$.
Given $\eps_0>0$, set $\Omega(\eps_0):=\{(x,y)\in\T^2\colon \forall \eps\le \eps_0, m_\eps(x)\ge n_\eps(x,y)\}$. 
Let $(x,y)\in \Omega(\eps_0)$. In the following we omit the dependence with respect to $(x,y)$ in $n_{\eps}(x,y)$ and $m_{\eps}(x)$. 
Since $\mu_\varphi$ is $\exp(-\varphi)$ conformal and $T^{n_\eps}$ is 1-1 on $C_\eps$ we have 
$$
\mu_\varphi(T^{n_\eps} C_\eps) = \int_{C_\eps} e^{-S_{n_\eps}\varphi} d\mu_\varphi.
$$
Since $C_\eps$ is contained in the cylinder $\CR_{n_\eps}$, the bounded distortion property gives
$$
\log \mu_\varphi(C_\eps) \approx S_{n_\eps}\varphi(x,y) + \log\mu_\varphi(T^{n_\eps}C_\eps) 
$$
on $C_\eps$. Moreover, Lemma~\ref{lem:Tneps} gives that $T^{n_\eps}C_\eps=T^{n_\eps}(\CR_{n_\eps}\cap\pi^{-1}(\CP_{m_\eps}))=\pi^{-1}(f^{n_\eps}\CP_{m_\eps})$ and by the Markov property of $(f,\CP)$ we get $f^{n_\eps}\CP_{m_\eps}(x)=\CP_{m_\eps-n_\eps}(f^{n_\eps}(x))$.
Therefore
\[
\log\mu_\varphi(T^{n_\eps}C_\eps)  
= \log \nu_\varphi(\CP_{m_\eps-n_\eps}(f^{n_\eps}(x)))
\approx S_{m_\eps-n_\eps}\psi\circ f^{n_\eps}(x)
\]
by the Gibbs property of $\nu_\varphi$. We end up with
\[
\log \mu_\varphi(C_\eps) \approx S_{n_\eps}\varphi + S_{m_\eps-n_\eps}\psi\circ\pi\circ T^{n_\eps} 
= S_{n_\eps}(\varphi-\psi\circ\pi) + S_{m_\eps}\psi\circ\pi .
\]
This holds on $\Omega(\eps_{0})$. 
The conclusion follows since $\mu_\varphi(\Omega(\eps_0))\to1$ as $\eps_0\to0$ by Lemma~\ref{lem:mn}.

\end{proof}
Denote the intermediate entropies by $h^{uu}=h_{\mu_\varphi}(T)-h_{\nu_\varphi}(f)$ and $h^u=h_{\nu_\varphi}(f)$.
Since the pressures of $(T,\varphi)$ and $(f,\psi)$ are zero we get (see Remark~\ref{rem-pressionproj} page \pageref{rem-pressionproj} ) that
\begin{equation}\label{eq:lesentropies}
h^u=-\int\psi\circ\pi d\mu_\varphi,
\quad
h^{uu}=-\int(\varphi-\psi\circ\pi) d\mu_\varphi.
\end{equation}
\begin{lemma}\label{lem:pdimexists}
With the previous notation, we get the next formula for the pointwise dimension: 
\[
\frac{h^{uu}}{\lambda^{uu}}+ \frac{h^u}{\lambda^{u}} = \delta.
\]
\end{lemma}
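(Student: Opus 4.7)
The plan is to compute the pointwise dimension $\lim_{\eps\to 0}\log\mu_\varphi(B((x,y),\eps))/\log\eps$ directly from the tools already developed, and invoke the Barreira--Pesin--Schmeling theorem recalled in the introduction to identify this limit with the Hausdorff dimension $\delta$.

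First, by Lemma~\ref{lem:Cball}, for $\mu_\varphi$-a.e.\ $(x,y)$ we have
\[
\log\mu_\varphi(C_{\uc\eps}(x,y))\ \le\ \log\mu_\varphi(B((x,y),\eps))\ \le\ \log\mu_\varphi(C_{\oc_\eps\eps}(x,y)).
\]
Since $\uc>0$ and $\oc_\eps=O(|\log\eps|)$ a.e., both $\log(\uc\eps)$ and $\log(\oc_\eps\eps)$ are equal to $\log\eps+O(\log|\log\eps|)$, and by Lemma~\ref{lem:FlGn}--\ref{lem:mn} the random times $n_{\uc\eps}$ and $n_{\oc_\eps\eps}$ differ from $n_\eps$ only by a quantity of order $\log|\log\eps|$, and similarly for $m_\eps$. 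Applying Lemma~\ref{lem:Cepssum} on both sides and using that $\varphi$ and $\psi\circ\pi$ are bounded, we obtain that
\[
\log\mu_\varphi(B((x,y),\eps)) = S_{n_\eps(x,y)}(\varphi-\psi\circ\pi)(x,y) + S_{m_\eps(x)}(\psi\circ\pi)(x,y) + o(\log\eps).
\]

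Next, I divide by $\log\eps$ and split the sums as
\[
\frac{\log\mu_\varphi(B((x,y),\eps))}{\log\eps} = \frac{n_\eps}{\log\eps}\cdot\frac{S_{n_\eps}(\varphi-\psi\circ\pi)}{n_\eps} + \frac{m_\eps}{\log\eps}\cdot\frac{S_{m_\eps}(\psi\circ\pi)}{m_\eps} + o(1).
\]
Since $\mu_\varphi$ is ergodic (being the unique equilibrium state of a H\"older potential over an expanding map), Birkhoff's ergodic theorem together with \eqref{eq:lesentropies} yields
\[
\frac{S_{n_\eps}(\varphi-\psi\circ\pi)}{n_\eps}\longrightarrow -h^{uu},\qquad \frac{S_{m_\eps}(\psi\circ\pi)}{m_\eps}\longrightarrow -h^u
\]
$\mu_\varphi$-a.e.\ as $\eps\to 0$ (the times $n_\eps,m_\eps\to\infty$ by Lemma~\ref{lem:mn}). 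On the other hand, Lemma~\ref{lem:mn} gives $n_\eps/\log\eps\to -1/\lambda^{uu}$ and $m_\eps/\log\eps\to -1/\lambda^u$ a.e. Combining these four limits yields
\[
\lim_{\eps\to 0}\frac{\log\mu_\varphi(B((x,y),\eps))}{\log\eps} = \frac{h^{uu}}{\lambda^{uu}}+\frac{h^u}{\lambda^u}
\]
for $\mu_\varphi$-a.e.\ $(x,y)$. Since $\mu_\varphi$ is hyperbolic and ergodic, the Barreira--Pesin--Schmeling theorem quoted in the introduction ensures that this a.e.\ pointwise dimension coincides with the Hausdorff dimension $\delta$ of $\mu_\varphi$, which gives the claimed identity.

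The only delicate point is the control of the slow factor $\oc_\eps$: it must be kept smaller than any power of $\eps$ so that replacing $\eps$ by $\oc_\eps\eps$ does not alter the quotient $\log\mu_\varphi/\log\eps$ in the limit. This is exactly the content of the bound $\oc_\eps=O(|\log\eps|)$ in Lemma~\ref{lem:Cball} (see also Remark~\ref{rem-olc-bunded-above}), which makes $\log\oc_\eps$ of order $\log|\log\eps|$ and hence negligible after division by $\log\eps$. All the other ingredients are applications of the Birkhoff ergodic theorem and the identities already obtained.
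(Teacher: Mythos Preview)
Your proof is correct and follows essentially the same route as the paper: compute $\lim_{\eps\to0}\log\mu_\varphi(C_\eps)/\log\eps$ via Lemma~\ref{lem:Cepssum}, split it as $(n_\eps/\log\eps)\cdot\frac{1}{n_\eps}S_{n_\eps}(\varphi-\psi\circ\pi)+(m_\eps/\log\eps)\cdot\frac{1}{m_\eps}S_{m_\eps}(\psi\circ\pi)$, and use Lemma~\ref{lem:mn} together with Birkhoff. The paper leaves the passage from $C_\eps$ to balls implicit (it simply says ``we recover that the pointwise dimension exists''), whereas you spell out the sandwich via Lemma~\ref{lem:Cball}; this is fine and in fact cleaner.

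One small comment: your final appeal to Barreira--Pesin--Schmeling is unnecessary (and strictly speaking that theorem is stated in the introduction for diffeomorphisms, while $T$ here is a non-invertible expanding map). You have already \emph{proved} that the pointwise dimension exists $\mu_\varphi$-a.e.\ and equals the constant $h^{uu}/\lambda^{uu}+h^u/\lambda^u$; the elementary fact that a constant a.e.\ pointwise dimension coincides with the Hausdorff dimension of the measure (Young's criterion) is all that is needed, and this is exactly how the paper concludes.
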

\begin{proof}
It follows from Lemmas~\ref{lem:mn} and~\ref{lem:Cepssum} that $\mu_\varphi$-a.e.
\[
\begin{split}
\lim_{\eps\to0}\frac{\log\mu_\varphi(C_\eps)}{\log\eps} 
&=
\lim_{\eps\to0} \frac{n_\eps}{\log\eps} \frac{1}{n_\eps}S_{n_\eps}(\varphi-\psi\circ\pi)+
\frac{m_\eps}{\log\eps} \frac{1}{m_\eps}S_{m_\eps}(\psi\circ\pi) \\
&=
-\frac{1}{\lambda^{uu}}\int(\varphi-\psi\circ\pi) d\mu_\varphi -\frac{1}{\lambda^{u}}\int\psi\circ\pi d\mu_\varphi.
\end{split}
\]
Here, we recover that the pointwise dimension of the measure $\mu_\varphi$ exists $\mu_{\varphi}$-a.e. and is constant. This together with Equation~\eqref{eq:lesentropies} prove the first equality.
Since it is constant, it is necessarily the Hausdorff dimension $\delta$ of the measure $\mu_\varphi$.
\end{proof}

Set $\delta^{uu} = \frac{h^{uu}}{\lambda^{uu}}$, $\delta^u=\frac{h^{u}}{\lambda^{u}}$ and define the functions 
\begin{equation}\label{eq:lesphii}
\phi_1 = \varphi-\psi\circ\pi+\delta^{uu}\log\frac{\partial g}{\partial y},
\quad 
 \phi_2=\psi\circ\pi+\delta^u\log f'\circ\pi. 
\end{equation}
By Equation~\eqref{eq:lesentropies} and Lemma~\ref{lem-expo-vertifibred} we have  
\[
\int\phi_1 d\mu_\varphi=\int\phi_2d\mu_\varphi=0.
\]
\begin{proposition}\label{pro:varzero-acim}
If the functions $\phi_1$ and $\phi_2$ are both cohomologous to zero then $\varphi$ is cohomologous to $-\log |\det DT|$, and reciprocally.
\end{proposition}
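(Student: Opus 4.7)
The plan is to prove both implications by reducing to the equivalence: $\phi_1$ and $\phi_2$ are both coboundaries if and only if the partial dimensions satisfy $\delta^u=\delta^{uu}=1$. Once this is known, $\varphi\sim -\log|\det DT|$ becomes a direct substitution in \eqref{eq:lesphii}.

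For the easier converse direction, assume $\varphi$ is $T$-cohomologous to $-\log|\det DT|$. Then $\mu_\varphi$ is the unique absolutely continuous invariant probability of $T$, so its projection $\nu_\varphi$ is the acim of $f$ and $\psi$ is $f$-cohomologous to $-\log f'$. Ruelle's inequality applied to $f$ and to the fiber dynamics gives $\delta^u,\delta^{uu}\le 1$, while Lemma~\ref{lem:pdimexists} combined with $\delta=2$ (the acim has full Hausdorff dimension) forces $\delta^u=\delta^{uu}=1$. Substituting these values together with $\psi\sim_f-\log f'$ into \eqref{eq:lesphii} turns $\phi_1$ and $\phi_2$ into coboundaries.

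For the direct direction, I first show $\delta^u=1$. The hypothesis $\phi_2\sim 0$ means $(\psi+\delta^u\log f')\circ\pi$ is a $T$-coboundary. Every $f$-periodic point $x$ lifts to a $T$-periodic point $(x,y)$ because the fiber map $y\mapsto g_n(x,y)$ is a uniformly expanding circle map, hence has fixed points. Since Birkhoff sums of $T$-coboundaries vanish along periodic orbits, $S_n^f(\psi+\delta^u\log f')(x)=S_n^T\phi_2(x,y)=0$ at every $f$-periodic $x$ of period $n$. Livsic's theorem for $f$ then yields that $\psi+\delta^u\log f'$ is $f$-cohomologous to $0$, so $\nu_\varphi$ is the equilibrium state of $-\delta^u\log f'$, of pressure zero by our normalization. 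Bowen's formula on the repeller $\T$ (of Hausdorff dimension $1$) identifies the unique such exponent as $\delta^u=1$, and in particular $\psi\sim_f -\log f'$.

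To conclude, I use $\phi_1\sim 0$: substituting $\psi\circ\pi$ by $-\log f'\circ\pi$ modulo a $T$-coboundary gives
\[
\varphi \sim -\log f'\circ\pi-\delta^{uu}\log\frac{\partial g}{\partial y}.
\]
The map $s\mapsto P_T(-\log f'\circ\pi-s\log\frac{\partial g}{\partial y})$ has derivative $-\int\log\frac{\partial g}{\partial y}\,d\mu_s<0$ by uniform vertical expansion, so it is strictly decreasing, and its unique zero is located at $s=1$ because $-\log|\det DT|$ admits the acim as equilibrium state with pressure zero. Since $P_T(\varphi)=0$, this zero equals $\delta^{uu}$, so $\delta^{uu}=1$ and $\varphi\sim-\log|\det DT|$. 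The delicate step is the descent from the $T$-cohomology of $\phi_2$ to an $f$-cohomology of $\psi+\delta^u\log f'$ via periodic orbits and Livsic, which requires enough regularity of $\psi$; this is supplied by the stretched-exponential variation from Proposition~\ref{lem-nuphi-gibbs}.
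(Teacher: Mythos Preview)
Your proof is correct and follows the same overall strategy as the paper: first establish $\delta^u=1$ from $\phi_2\sim 0$ via the identity $P_f(-\delta^u\log f')=0$, then deduce $\delta^{uu}=1$ from $\phi_1\sim 0$ and conclude $\varphi\sim-\log|\det DT|$. Two technical points differ. First, the paper simply asserts that $\phi_2\sim_T 0$ implies $\psi+\delta^u\log f'\sim_f 0$, whereas you justify this descent by lifting $f$-periodic orbits to $T$-periodic orbits and invoking Livsic; your argument is the more honest one, and in fact one only needs that the $f$-periodic sums of $\psi+\delta^u\log f'$ vanish to conclude equality of pressures, so the full Livsic theorem (and its regularity hypothesis) is not strictly needed. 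Second, to obtain $\delta^{uu}=1$ the paper combines the convexity inequality $0=P_T(\varphi)\ge P_T(-\log|\det DT|)+(1-\delta^{uu})\lambda^{uu}$ with the a priori bound $\delta=\delta^u+\delta^{uu}\le 2$, while you use instead the strict monotonicity of $s\mapsto P_T(-\log f'\circ\pi - s\log\partial_y g)$; your route is slightly cleaner and avoids appealing to the global dimension bound. For the converse, the paper dismisses it as ``immediate''; your argument is fine, though once you know $\mu_\varphi$ and $\nu_\varphi$ are the respective absolutely continuous invariant measures you get $h^u=\lambda^u$ and $h_{\mu_\varphi}(T)=\lambda^u+\lambda^{uu}$ directly from Rokhlin's formula, hence $\delta^u=\delta^{uu}=1$ without invoking Ruelle's inequality.
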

\begin{proof}
Suppose that $\phi_1$ and $\phi_2$ are cohomologous to zero.

Since $\phi_2$ is $T$-cohomologous to zero, $\psi-\delta^u \log|f'|$ is $f$-cohomologous to zero, hence $\psi$ is $f$-cohomologous to $-\delta^u\log|f'|$. Therefore the $f$-pressure of $-\delta^u\log|f'|$ is zero. Since $f$ is uniformly expanding this implies that $\delta^u=1$.

We have that $\phi_1$ is cohomologous to $-\log|f'|-\delta^{uu}\log \left|\frac{\partial g}{\partial y}\right|$. Since $\det DT=f'\circ \pi \cdot \frac{\partial g}{\partial y}$ we get that
$\varphi$ is cohomologous to $-\log |\det DT| +(1-\delta^{uu})\log\left|\frac{\partial g}{\partial y}\right|$. But the convexity of the pressure gives
\[
0 = P_T(\varphi)\ge P_T(-\log|\det DT|)+(1-\delta^{uu})\int \log\left|\frac{\partial g}{\partial y}\right| d\mu_\varphi = (1-\delta^{uu})\lambda^{uu}.
\]
Therefore $\delta^{uu}\ge1$. On the other hand, $\delta^{u}+\delta^{uu}=\delta\le 2$, which implies that $\delta^{uu}=1$ also, proving the result.

The reciprocal is immediate.
\end{proof}

Define the process
\[
N_\eps''(t):=\frac{S_{n_{\eps^t}}\phi_1 + S_{m_{\eps^t}}\phi_2}{\sqrt{-\log\eps}},
\quad t\in[0,1].
\]
We are now able to relate the convergence of the two processes.

\begin{lemma}\label{lem:primprim}
There exists a constant $C_0<+\infty$ a.s. such that 
\[
\sup_{t\in[0,1]}\left| N_\eps'(t) - N_\eps''(t) \right| \le \frac{C_0}{\sqrt{-\log\eps}}
\]
for any $\eps>0$.
\end{lemma}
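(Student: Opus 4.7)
The strategy is to directly compare $N_\eps'(t)$ and $N_\eps''(t)$ by plugging in the definitions of $\phi_1$ and $\phi_2$ and applying the approximation of Lemma~\ref{lem:Cepssum}. Writing $\eta = \eps^t$, the problem reduces for each fixed $t$ to comparing $\log\mu_\varphi(C_\eta(x,y))$ with $S_{n_\eta}\phi_1+S_{m_\eta}\phi_2$.

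The first step is to expand the Birkhoff sums of $\phi_1$ and $\phi_2$. By definition of these functions (equation \eqref{eq:lesphii}),
\[
S_{n_\eta}\phi_1 = S_{n_\eta}(\varphi-\psi\circ\pi) + \delta^{uu}\sum_{k=0}^{n_\eta-1}\log\frac{\partial g}{\partial y}\circ T^k = S_{n_\eta}(\varphi-\psi\circ\pi) + \delta^{uu}\log G_{n_\eta},
\]
and similarly $S_{m_\eta}\phi_2=S_{m_\eta}(\psi\circ\pi)+\delta^u\log F_{m_\eta}$, using the definitions of $F_n,G_n$ from equation \eqref{eq:FnGn}.

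The second step is to use Lemma~\ref{lem:FlGn}, which gives $c\le G_{n_\eta}\eta\le 1$ and $c\le F_{m_\eta}\eta\le 1$. Taking logarithms,
\[
\log G_{n_\eta} = -\log\eta + O(1), \qquad \log F_{m_\eta} = -\log\eta + O(1),
\]
where the $O(1)$ constants depend only on $|\log c|$. Summing and using $\delta^u+\delta^{uu}=\delta$ (Lemma~\ref{lem:pdimexists}),
\[
S_{n_\eta}\phi_1+S_{m_\eta}\phi_2 = S_{n_\eta}(\varphi-\psi\circ\pi)+S_{m_\eta}(\psi\circ\pi) - \delta\log\eta + O(1).
\]

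The third step applies Lemma~\ref{lem:Cepssum} to replace the first two Birkhoff sums by $\log\mu_\varphi(C_\eta(x,y))$ at the cost of a further additive error bounded by an a.s.\ finite random constant (independent of $\eta$). Substituting $\eta=\eps^t$ so that $\log\eta=t\log\eps$ and dividing by $\sqrt{-\log\eps}$ yields
\[
N_\eps''(t)-N_\eps'(t) = \frac{\log\mu_\varphi(C_{\eps^t})-\delta t\log\eps - (S_{n_{\eps^t}}\phi_1+S_{m_{\eps^t}}\phi_2)}{-\sqrt{-\log\eps}} \cdot (-1),
\]
with numerator bounded by an a.s.\ finite random constant $C_0$ that does not depend on $\eps$ or on $t$. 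Taking the supremum over $t\in[0,1]$ preserves the bound since $C_0$ is uniform in $t$, giving the desired estimate.

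The main thing to double-check is precisely that the constants arising at each step are indeed uniform in $t$: the constant in Lemma~\ref{lem:Cepssum} depends only on the distortion constants and on $(x,y)$ through the a.s.\ finite random quantities introduced in the proof of that lemma, while the $O(1)$ errors from Lemma~\ref{lem:FlGn} are absolute. Hence no step produces a $t$-dependent error term, which is what makes the supremum bound valid.
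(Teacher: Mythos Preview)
Your proof is correct and follows essentially the same route as the paper's: expand $S_{n_\eta}\phi_1+S_{m_\eta}\phi_2$ using the definitions of $\phi_1,\phi_2$, use Lemma~\ref{lem:FlGn} to replace $\delta^{uu}\log G_{n_\eta}+\delta^u\log F_{m_\eta}$ by $-\delta\log\eta$ up to a bounded error, and invoke Lemma~\ref{lem:Cepssum}. Your explicit remark that all constants are uniform in $t$ is a welcome clarification; note only the harmless sign slip in your displayed formula (the right-hand side actually equals $N_\eps'(t)-N_\eps''(t)$), which does not affect the bound.
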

\begin{proof}
By Lemma~\ref{lem:pdimexists} we have $\delta=\delta^{uu}+\delta^u$, thus by Lemma~\ref{lem:FlGn} we have
\[
-\delta\log\eps \approx \delta^{u} \log F_{m_\eps} + \delta^{uu} \log G_{n_\eps}.
\]
This relation, together with the facts that $\log F_{m_\eps}=S_{m_\eps}\log f'\circ\pi$ and $\log G_{n_\eps}=S_{n_\eps}\log\frac{\partial g}{\partial y}$, and Lemma~\ref{lem:Cepssum} yield
\[
\begin{split}
\log\mu_{\varphi}(C_{\eps}(x,y))-\delta\log\eps
&\approx 
S_{n_\eps}(\varphi-\psi\circ\pi+\delta^{uu}\log \frac{\partial g}{\partial y}) + S_{m_\eps}(\psi\circ\pi+\delta^{u}\log f'\circ\pi)\\
&=
S_{n_\eps}\phi_1 + S_{m_\eps}\phi_2.
\end{split}
\]

Therefore, there exists a constant $C_0$ finite a.e. on $\T^2$ such that for any $\eps$ and $t\in[0,1]$, we have
\[
|N_\eps'(t)-N_\eps''(t)| \le \frac{C_0}{\sqrt{-\log\eps}}.
\]
\end{proof}

To complete the proof of the main theorem we are left to prove the convergence of the process $N_\eps''$ toward a (possibly degenerate) Wiener process. Since $\phi_1$ and $\phi_2$ have a good regularity and are centered it is well known that their Birkhoff sums follow a central limit theorem. However a problem arise here. The ``times'' $n_\eps$ and $m_\eps$ are not constant but they depend on the point.


\section{Invariance principle, random change of time}\label{section-proba-wip}

The invariance principle consists in an approximation of all the trajectory of the processes $(S_n\phi_1)$ and $(S_m\phi_2)$ by a Brownian motion, and this is what we need in a first step. Then, a random change of time in the process will give us back $N_\eps''$.
Observe that it is sufficient to show the convergence in distribution along the subsequence $\eps=e^{-k}$, that is the convergence of the process $\CX_k=N_{e^{-k}}''$ in the Skorohod topology.

\subsection{Invariance principle}

Let $\phi\colon \T^2\to\R^2$ defined by $\phi=(\phi_1,\phi_2)$. The function $\phi$ has stretched exponential decay of the variation $\var_n \phi$. Hence, if we set $S_n\phi=(S_n\phi_1,S_n\phi_2)$, the central limit theorem holds for $S_{n}\phi$.
Denote by $Q$ the limiting covariance matrix of $\frac{1}{\sqrt{n}}S_n\phi$.
Define the process $\CY_k$ by 
\[
\CY_k(t)=\frac{1}{\sqrt{k}}\left(S_{\lfloor kt\rfloor}\phi+(kt-\lfloor kt\rfloor)\phi\circ T^{\lfloor kt\rfloor}\right).
\]
We denote by $\CC$ the space $C([0,1],\R)$ endowed with the topology of uniform convergence.

The weak invariance principle, or functional central limit theorem, is well known in this setting.

\begin{theorem}[WIP, folklore]\label{thm:wip}
The process $\CY_k$ converges in distribution in $\CC^2$ to a two-dimensional brownian motion $\CB=(\CB_t)_{t\in[0,1]}$ with covariance matrix $Q$.
\end{theorem}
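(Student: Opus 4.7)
The strategy is the classical two-step approach: verify convergence of finite-dimensional distributions and tightness in $\CC^2$. Both can be handled at once by the martingale approximation method of Gordin, which is well suited to this Gibbs setting; alternatively one could appeal directly to a $\phi$-mixing WIP, but the martingale path is cleaner here and the decay rate of $\var_n\phi_i$ is more than enough for it.

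First, I would set up the approximation. View $(T,\mu_\varphi)$ on $\T^2$ as a one-sided Markov shift via the partition $\CR$ from Lemma~\ref{lem-partimarkov}, and let $\CF_n = T^{-n}\CB$, where $\CB$ is the Borel $\sigma$-algebra. Since $\varphi$ is H\"older and $\psi$ has stretched exponential variation (Proposition~\ref{lem-nuphi-gibbs}), so do the two coordinates of $\phi$. Standard transfer operator estimates then give $\|\E_{\mu_\varphi}[\phi\circ T^n \mid \CF_0]\|_2 \le C\exp(-c\sqrt{n})$, which is summable. Hence
\[
h \;=\; -\sum_{k=1}^\infty \E_{\mu_\varphi}[\phi\circ T^k \mid \CF_0]
\]
converges in $L^2(\mu_\varphi;\R^2)$ and yields the decomposition $\phi = m + h - h\circ T$, where $m := \phi - h + h\circ T$ satisfies $\E[m\mid T^{-1}\CB]=0$. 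Thus $(m\circ T^n)_{n\ge0}$ is an $\R^2$-valued stationary ergodic reverse-martingale difference sequence in $L^2$.

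Second, I would invoke the vector-valued martingale WIP of Billingsley/Brown: the process $t\mapsto \frac{1}{\sqrt{k}} S_{\lfloor kt\rfloor} m$ converges in distribution in $\CC^2$ to a Brownian motion with covariance matrix $\E_{\mu_\varphi}[m\otimes m]$. To pass from $m$ back to $\phi$, write $S_n\phi = S_n m + h - h\circ T^n$. The error term contributes at most $\frac{1}{\sqrt{k}} \max_{0\le j\le k}|h\circ T^j|$, which tends to $0$ in probability by the maximal ergodic inequality since $h\in L^2$. The linear interpolation piece $(kt-\lfloor kt\rfloor)\phi\circ T^{\lfloor kt\rfloor}/\sqrt{k}$ is bounded by $\|\phi\|_\infty/\sqrt{k}$ and so vanishes uniformly. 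Combining these, $\CY_k$ converges in $\CC^2$ to a Brownian motion, and its covariance must equal the one $Q$ produced by the scalar CLTs applied to $\phi_1$, $\phi_2$, and their linear combinations, since the finite-dimensional marginals of $\frac{1}{\sqrt{n}}S_n\phi$ and $\frac{1}{\sqrt{n}}S_n m$ share the same Gaussian limit.

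The only non-routine point is the $L^2$ summability of $\|\E_{\mu_\varphi}[\phi\circ T^n\mid\CF_0]\|_2$, which is where one genuinely uses the stretched exponential variation of $\phi$ together with the Gibbs property via the transfer operator. Once this is in place, everything else is an application of the classical martingale WIP, which is presumably why the authors label the statement as folklore and only sketch or cite it.
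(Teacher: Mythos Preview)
The paper does not actually prove this theorem: it labels it folklore, remarks that a clean reference for the vector-valued weak invariance principle is absent even in this ideal setting, and simply invokes the almost sure invariance principle for vector-valued observables from~\cite{asip}, which immediately implies the weak version needed. Your proposal is therefore a genuinely different and more self-contained route, via Gordin's martingale--coboundary decomposition followed by the vector-valued martingale WIP. This buys an explicit argument rather than a citation, and makes transparent exactly where the regularity of $\phi$ enters (namely in the summability of the transfer-operator iterates).

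There is, however, a concrete error in your setup. With $\CF_n=T^{-n}\CB$ you have $\CF_0=\CB$, so $\E_{\mu_\varphi}[\phi\circ T^k\mid\CF_0]=\phi\circ T^k$ identically, and your series for $h$ diverges; the claimed decay $\|\E_{\mu_\varphi}[\phi\circ T^n\mid\CF_0]\|_2\le Ce^{-c\sqrt{n}}$ is just $\|\phi\|_2\le Ce^{-c\sqrt{n}}$, which is false. For a non-invertible expanding map the decaying object is $\E_{\mu_\varphi}[\phi\mid\CF_k]=(L^k\phi)\circ T^k$, where $L$ is the transfer operator; stretched-exponential variation of $\phi$ gives $\|L^k\phi\|_2\le Ce^{-c\sqrt{k}}$, and the solution of the cohomological equation is $g=\sum_{k\ge1}L^k\phi$, yielding $\phi=m+g\circ T-g$ with $Lm=0$. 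Then $(m\circ T^j)_j$ is a reverse martingale difference for the decreasing filtration $(\CF_j)$, as you in fact say, and the vector-valued (reverse) martingale WIP applies. Once this formula is corrected the remainder of your argument---control of the coboundary by the maximal ergodic inequality, disposal of the interpolation term, identification of the covariance as $Q$---goes through without change.
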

Note that $\CB$ (and also $\CY_{k}$) is continuous, hence the Skorohod topology coincides with the topology of uniform convergence.
We remark that the weak invariance principle for vector valued processes is not present in the literature, although it is a part of the folklore. We were indeed not able to give a proper reference, even in this ideal context of uniformly expanding maps with H\"older potential. 
For the sake of completeness one can always invoke the almost sure invariance principle for vector valued observables~\cite{asip}, which implies immediately the weak invariance principle that we need.

Writing $Q=U\Lambda U^*$ for some orthogonal matrix $U$ and $\Lambda=\diag(\sigma_1^2,\sigma_2^2)$, we have that  $\CW:=U^*\CB=(\sigma_1 W_1,\sigma_2 W_2)$, where $W_1$ and $W_2$ are two independent standard Wiener processes.

\subsection{Random change of time and conclusion}

If $n_\eps$ and $m_\eps$ were independent and independent of the process $(\CY_k)$ then we could conclude by direct computation, but these independencies are generally false. The good strategy is to make a random change of time in this process.
We follow the general line of Billingsley (\cite{billingsley}, Theorem 14.4). 
The setting here is a bit different: two dimensional time, no need for Skorohod topology.

\subsubsection{Existence of the limiting distribution.}

Fix $a>1/\lambda^u$.
Let $\CZ_k$ be the process in $C(|0,a]^2,\R^2)$ defined by
$$
\disp \CZ_k(t_1,t_2) = \left( \CY_{k,1}(t_1),\CY_{k,2}(t_2)\right)
$$ 
for any $(t_1,t_2)\in[0,a]^2$. Let $\tilde\nu_k(t)=(n_{e^{-kt}},m_{e^{-kt}})$.
The real functions $\tilde\nu_{k,i}(t)$, $i=1,2$, are not continuous in $t$. We define $\nu_{k,i}(t)$ as the continuous function obtained from $\tilde\nu_{k,i}(t)$ by linear interpolation at the jump points. Namely, $\nu_{k,i}$ is continuous, affine by part, and coincides with $\tilde\nu_{k,i}$ at the jump points.

Let $\theta_1=1/\lambda^{uu}$, $\theta_2=1/\lambda^{u}$ and define the random element $\Phi_k\in C([0,1]^2,[0,a]^2)$ by 
\[
\Phi_k(t_1,t_2) = \begin{cases}
( \nu_{k,1}(t_1) /k,  \nu_{k,2}(t_2)/k) &\text{ if } \nu_{k,1}(1)/k\le a\text{ and } \nu_{k,2}(1)/k\le a\\
( \theta_1 t_1,\theta_2 t_2) &\text{ otherwise}
\end{cases}
\]

Let $\beta\colon C([0,1]^2)\to C([0,1])$ defined by $\beta(u)(t)=u(t,t)$ and $\gamma\colon C([0,1],\R^2)\to C([0,1],\R)$ defined by $\gamma(u)(t)=u_1(t)+u_2(t)$.
Note that 
\[
\CX_k=\beta(\gamma(\CZ_k\circ\Phi_k)) +O(\frac1{\sqrt{k}}),
\]
whenever the condition in the definition of $\Phi_{k}$ holds (both times are less than $a$), which happens eventually almost surely.
\begin{lemma}\label{lem:mnt}
The processes $(\frac{n_{\eps^{t_1}}}{-\log\eps})_{t_1\in[0,1]}$ and  $((\frac{m_{\eps^{t_2}}}{-\log\eps})_{t_2\in[0,1]}$ converge in probability in $\C$, respectively,  to $(\frac{t_1}{\lambda^{uu}})$ and $(\frac{t_2}{\lambda^{u}})$.
\end{lemma}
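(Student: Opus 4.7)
The plan is to upgrade the pointwise a.s.\ convergence of $n_\eps/(-\log\eps)$ and $m_\eps/(-\log\eps)$ given by Lemma~\ref{lem:mn} into uniform convergence in $t$, exploiting the monotonicity of these processes.

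First I would establish pointwise convergence at each fixed $t$. For $t\in(0,1]$, replacing $\eps$ by $\eps^t$ in Lemma~\ref{lem:mn} and using $-\log\eps^t=-t\log\eps$ yields, $\mu_\varphi$-a.s.,
\[
\frac{n_{\eps^t}}{-\log\eps} \;=\; t\cdot\frac{n_{\eps^t}}{-\log\eps^t} \;\xrightarrow[\eps\to0]{}\; \frac{t}{\lambda^{uu}},
\]
and analogously $m_{\eps^t}/(-\log\eps)\to t/\lambda^u$. Letting $t$ range over a countable dense set $D\subset[0,1]$ containing $0$ and $1$, and taking the (countable) intersection of the corresponding full-measure sets, we obtain a set $\Omega^*$ of full $\mu_\varphi$-measure on which both pointwise limits hold simultaneously for every $t\in D$.

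Next I would upgrade to uniform convergence by a monotonicity argument. For every $\eps\in(0,1)$ and every $(x,y)$, the function $t\mapsto n_{\eps^t}(x,y)$ is non-decreasing in $t$ (since $\eps^t$ decreases with $t$ and $n_\cdot$ is non-increasing), and similarly for $m_{\eps^t}$. The respective candidate limits $t\mapsto t/\lambda^{uu}$ and $t\mapsto t/\lambda^u$ are continuous and strictly increasing on $[0,1]$. A classical theorem of P\'olya states that, on a compact interval, pointwise convergence of a sequence of monotone functions to a continuous monotone limit on a dense subset forces uniform convergence on the whole interval. Applying this on $\Omega^*$ yields
\[
\sup_{t\in[0,1]}\left|\frac{n_{\eps^t}}{-\log\eps}-\frac{t}{\lambda^{uu}}\right|\xrightarrow[\eps\to0]{}0 \quad \text{a.s.,}
\]
and similarly for $m_{\eps^t}$. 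Almost sure uniform convergence implies convergence in probability in $\C$.

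The only subtlety is that the step functions $t\mapsto n_{\eps^t}/(-\log\eps)$ are not strictly elements of $C([0,1])$; this is harmless since their jumps have height $1/(-\log\eps)\to 0$, so the supremum distance to the linearly interpolated continuous version $\nu_{k,i}/k$ defined earlier is at most $O(1/\log\eps)$, uniformly in $t$. The main (and only real) step is P\'olya's uniform-convergence theorem for monotone functions; once that is invoked, the conclusion follows immediately from Lemma~\ref{lem:mn}.
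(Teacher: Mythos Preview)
Your proof is correct and follows essentially the same route as the paper: pointwise convergence from Lemma~\ref{lem:mn}, then monotonicity plus P\'olya's theorem to upgrade to uniform convergence, then almost sure convergence implies convergence in probability. The only differences are cosmetic: the countable-dense-set step is actually unnecessary here, since the single full-measure set from Lemma~\ref{lem:mn} already gives $n_{\eps^t}/(-\log\eps)\to t/\lambda^{uu}$ for \emph{every} $t\in(0,1]$ simultaneously (because $\eps^t\to0$ as $\eps\to0$); and your remark on the step functions versus their linear interpolations is a welcome clarification that the paper leaves implicit.
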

\begin{proof}
By Lemma~\ref{lem:mn}, almost everywhere, for any  $t_1\in[0,1]$, $\frac{n_{\eps^{t_1}}}{-\log\eps}$ converges to $(\frac{t_1}{\lambda^{uu}})$. Since the process is positive and nondecreasing in $t_1$, it follows from Dini's (or P\'olya's) theorem that the convergence is uniform. Hence the process converges almost surely in $\C$, hence in probability.
The same is true for $m_\eps$.
\end{proof}
By Lemma~\ref{lem:mnt} the map $\Phi_k$ converges almost surely in uniform norm to the map $\Phi$ defined by $\Phi(t_1,t_2)=(\theta_1 t_1,\theta_2 t_2)$ for any $(t_1,t_2)\in[0,1]^2$.

Define the continuous mapping $h$ from $C([0,a],\R^2)$ to $C([0,a]^2,\R^2)$ by
\[
h(y)(t_1,t_2)=(y_1(t_1),y_2(t_2))
,\quad y\in C([0,a],\R^2).
\]

\begin{lemma} 
The process $(\CZ_k)$ converges in distribution to $\CZ=h(\B)$.
\end{lemma}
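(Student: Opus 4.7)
The plan is to derive this lemma as a direct application of the continuous mapping theorem to the weak invariance principle (Theorem~\ref{thm:wip}). The key observation is that by construction we already have $\CZ_k = h(\CY_k)$: indeed, for every $(t_1, t_2) \in [0,a]^2$,
\[
h(\CY_k)(t_1, t_2) = (\CY_{k,1}(t_1), \CY_{k,2}(t_2)) = \CZ_k(t_1, t_2).
\]
It therefore suffices to verify that $h$ is continuous and that $\CY_k$ converges in distribution to $\CB$ in $C([0,a], \R^2)$.

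First I would establish the continuity of $h\colon C([0,a], \R^2) \to C([0,a]^2, \R^2)$, which is elementary: if $y^{(n)} \to y$ uniformly on $[0,a]$, then
\[
\sup_{(t_1,t_2)\in[0,a]^2} \|h(y^{(n)})(t_1,t_2) - h(y)(t_1,t_2)\| \le 2\, \|y^{(n)}-y\|_\infty,
\]
so $h(y^{(n)}) \to h(y)$ uniformly on $[0,a]^2$. As for the invariance principle, Theorem~\ref{thm:wip} is stated on $[0,1]$, but the same proof (or the vector-valued ASIP of~\cite{asip} that underlies it) applies verbatim on any fixed compact interval; equivalently, the linear time rescaling $s = t/a$ reduces to the case $[0,1]$. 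In either case, $\CY_k$ converges in distribution in $C([0,a], \R^2)$ to a two-dimensional Brownian motion $\CB$ on $[0,a]$ with covariance matrix $Q$.

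Combining these two ingredients, the continuous mapping theorem yields
\[
\CZ_k = h(\CY_k) \stackrel{\CD}{\Longrightarrow} h(\CB) = \CZ,
\]
as claimed. There is no serious obstacle; the only mildly non-routine point is the extension of the WIP from $[0,1]$ to $[0,a]$, which is purely cosmetic.
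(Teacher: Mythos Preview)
Your proof is correct and follows essentially the same approach as the paper: the paper's proof is simply ``We have $\CZ_k=h(\CY_k)$, and by continuity we get that $\CZ_k$ converges in distribution to $h(\CB)$.'' You have merely made explicit the continuity check for $h$ and the routine extension of the WIP to $[0,a]$, both of which the paper takes for granted.
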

\begin{proof}
We have $\CZ_k=h(\CY_k)$, and by continuity we get that $\CZ_k$ converges in distribution to $h(\CB)$.
\end{proof}

Since $\CZ_k$ converges to $\CZ$ in distribution and $\Phi_k$ converges to (the deterministic) $\Phi$ in probability, the couple $(\CZ_k,\Phi_k)$ converges to $(\CZ,\Phi)$ (\cite{billingsley}, Theorem 3.9). By continuity of the composition we conclude that $\CZ_k\circ\Phi_k$ converges in distribution to $\CZ\circ\Phi$.
By continuity again we finally get that $\CX_k$ converges in distribution to 
\[
\CX=\beta(\gamma(h(\CB)\circ\Phi)).
\]

\subsubsection{The limit is a Wiener process.}

To finish the proof we are left to characterize the limiting process $\CX$.
Denote the transfer matrix by $U=(u_{ij})$. Note that $\theta_1<\theta_2$.
For any $t\in[0,1]$ we have 
\[
\begin{split}
\CX(t)
&=\beta(\gamma(h(\CB)\circ\Phi))(t)\\
&=
h_1(U\CW)(\theta_1 t,\theta_2 t) + h_2(U\CW)(\theta_1 t,\theta_2 t) \\
&=
u_{11}\sigma_1 W_1(\theta_1 t)+u_{12}\sigma_2 W_2(\theta_1 t) +
u_{21}\sigma_1 W_1(\theta_2 t)+u_{22}\sigma_2 W_2(\theta_2 t) \\
&=
(u_{11}+u_{21})\sigma_1 W_1(\theta_1 t)+(u_{12}+u_{22})\sigma_2 W_2(\theta_1 t) + \\
&\quad\quad + u_{21}\sigma_1 (W_1(\theta_2 t)-W_1(\theta_1 t))+u_{22}\sigma_2 (W_2(\theta_2 t)-W_2(\theta_1 t)) 
\end{split}
\]
By independence of the processes $W_i$ and independence of their increments, we get that $\CX(t)$ is again a Wiener process, its variance is

\begin{eqnarray}
\sigma^2
&:=&
\var \CX(1)\nonumber \\
&=&
((u_{11}+u_{21})\sigma_1)^2\theta_1
+
(u_{12}+u_{22})\sigma_2)^2\theta_1
+
(u_{21}\sigma_1)^2(\theta_2-\theta_1)
+
(u_{22}\sigma_2)^2(\theta_2-\theta_1).
\nonumber \\
&&\label{equ-variance-final-dim2}
\end{eqnarray}

\begin{remark}\label{rem:varzero}
We remark that the variance vanishes if and only if 
\[
\left\{
\begin{matrix}
u_{11}\sigma_1+u_{21}\sigma_1 & =0\\
u_{12}\sigma_2+u_{22}\sigma_2 & =0\\
u_{21}\sigma_1 & =0\\
u_{22}\sigma_2 & =0
\end{matrix}
\right.
\quad\Longleftrightarrow\quad
U \left(\begin{matrix} \sigma_1\\ \sigma_2 \end{matrix} \right)= 0,
\]
which is equivalent to $\sigma_1=\sigma_2=0$ since the matrix $U$ is invertible.
This is equivalent to the fact that the covariance matrix $Q=0$, which happens if and only if both $\phi_1$ and $\phi_2$ are cohomologous to zero. Then, we use Proposition \ref{pro:varzero-acim}.
\end{remark}

We finally have the conclusion: the process $N_\eps$ converges in the Skorohod topology to a Wiener process $N$ with variance $\sigma^2$. 

\section{Generalizations and open questions}\label{sec:general}

For each of these situations the method developed in the paper gives a version of the theorem.
We compute the exact limiting distribution (i.e. the variance of the limit).
We do not rewrite their proofs in full details since it is very close.

\subsection{Conformal hyperbolic dynamics}

We present two situations of conformal hyperbolic dynamics where our method can be applied verbatim.
We refer to~\cite{barreirabook} for their precise definitions, and also for the estimates concerning the geometry of cylinders and further notions such as invariant measures of full dimension and maximal dimension.

\begin{theorem}
Let $J$ be a repeller of a $C^{1+\alpha}$ transformation $T$, for some $\alpha>0$, such that $T$ is conformal and topologically mixing on $J$, and $\mu$ be the equilibrium measure of a H\"older continuous $\varphi\colon J\to\R$. Denote the asymptotic variance of $\varphi+\frac{h_{\mu_\varphi}}{\lambda_{\mu_\varphi}}\log f'$ by $\sigma_u^2$.

Then the statement of the main theorem holds. The variance of the limit is $\sigma^2:=\frac{\sigma_u^2}{\lambda_{\mu_\varphi}}$, which vanishes iff $\mu$ is the measure of maximal (or full) dimension in $J$.
\end{theorem}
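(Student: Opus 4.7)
The plan is to run the same scheme as in the skew-product case, but with a single time scale since conformality eliminates the second Lyapunov direction. First, I would fix a Markov partition $\CR$ of $J$ whose boundary avoids typical orbits (the genericity being guaranteed by a Borel--Cantelli argument analogous to Lemma~\ref{lem:Cball}). Conformality of $T$ means $D_xT$ is a scalar multiple of an isometry, so $\|D_xT^n\|=|(T^n)'(x)|$, and the bounded distortion property applies to $|(T^n)'|$ on each $n$-cylinder. Define
\[
m_\eps(x) = \max\{k\colon |(T^k)'(x)|\eps\le 1\}
\quad\text{and}\quad
C_\eps(x)=\CR_{m_\eps(x)}(x).
\]
By conformality and bounded distortion $C_\eps(x)$ is comparable to a ball of radius $\eps$: the image $T^{m_\eps}C_\eps$ has bounded diameter and bounded inradius, hence there exist random constants $\uc$, $\oc_\eps = O(|\log\eps|)$ such that
\[
C_{\uc\eps}(x)\subset B(x,\eps)\subset C_{\oc_\eps\eps}(x),
\]
exactly as in Lemma~\ref{lem:Cball} (the ``bow tie'' step being trivial since there is no second direction).

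Next, by the Gibbs property of $\mu_\varphi$ and $T^{m_\eps}C_\eps\in\CR$, one has
\[
\log\mu_\varphi(C_\eps(x))\approx S_{m_\eps}\varphi(x),
\]
and by Birkhoff's theorem $m_\eps/(-\log\eps)\to 1/\lambda_{\mu_\varphi}$ almost surely. Combined with $|(T^{m_\eps})'|\eps\asymp 1$, i.e.\ $S_{m_\eps}\log|T'|\approx -\log\eps$, the pointwise dimension formula $\delta=h_{\mu_\varphi}/\lambda_{\mu_\varphi}$ (Young's theorem, or a direct computation paralleling Lemma~\ref{lem:pdimexists}) gives
\[
\log\mu_\varphi(B(x,\eps^t))-t\delta\log\eps \approx S_{m_{\eps^t}}\phi,
\qquad \phi:=\varphi+\delta\log|T'|,
\]
a centred Hölder observable. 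The reduction of $N_\eps$ to $N_\eps''(t):=S_{m_{\eps^t}}\phi/\sqrt{-\log\eps}$ is identical to Lemmas~\ref{lem-N-Nprime} and~\ref{lem:primprim}.

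It remains to prove that $N_\eps''$ converges in $\CD([0,1])$ to a Wiener process. Here the situation is strictly simpler than in Section~\ref{section-proba-wip}: $\phi$ is scalar-valued, so the weak invariance principle $\CY_k(t):=S_{\lfloor kt\rfloor}\phi/\sqrt{k}\Rightarrow \sigma_u W(t)$ is standard for Hölder observables on topologically mixing conformal repellers. Using the random time-change $\Phi_k(t)=m_{e^{-kt}}/k$, which converges uniformly in probability to $t/\lambda_{\mu_\varphi}$ by the analogue of Lemma~\ref{lem:mnt}, Billingsley's composition theorem gives
\[
N_\eps''\Rightarrow \sigma_u\,W(\cdot/\lambda_{\mu_\varphi})\stackrel{d}{=}\frac{\sigma_u}{\sqrt{\lambda_{\mu_\varphi}}}\,W(\cdot),
\]
so the limiting variance is $\sigma^2=\sigma_u^2/\lambda_{\mu_\varphi}$.

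Finally, $\sigma^2=0$ iff $\phi=\varphi+\delta\log|T'|$ is cohomologous to a constant, which, being $\mu_\varphi$-centred, must be $0$. When this holds, $\mu_\varphi$ is the equilibrium state of $-\delta\log|T'|$; since $T$ is conformal and topologically mixing the Bowen equation $P(-s\log|T'|)=0$ characterises $s=\dim_H J$ and its equilibrium state as the measure of full (equivalently maximal) dimension, so $\delta=\dim_H J$ and $\mu_\varphi$ is that measure; the converse is immediate. The main obstacle in all of this is really just verifying the Borel--Cantelli step for the boundary of $\CR$ in the possibly fractal repeller $J$, but this follows from the standard estimate $\mu_\varphi(B(\partial\CR,r))\lesssim r^\alpha$ for some $\alpha>0$, which suffices for the argument of Lemma~\ref{lem:Cball} with the obvious modification of the exponents.
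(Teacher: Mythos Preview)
Your proposal is correct and follows exactly the approach the paper itself indicates: the paper's proof of this theorem is a one-line remark that the result is obtained by a simplification of the main proof, ``just remove any dependence in $y$'', and reads off the variance from the formula~\eqref{equ-variance-final-dim2} with $u_{21}=u_{22}=u_{12}=0$. Your write-up spells out precisely this simplification, and your handling of the degenerate-variance case via Bowen's equation is the natural one-dimensional analogue of Proposition~\ref{pro:varzero-acim}; the only point where you add something beyond the paper is the observation that on a fractal repeller the boundary estimate may only be $\mu_\varphi(B(\partial\CR,r))\lesssim r^\alpha$ rather than linear, which is a fair remark and, as you note, only changes the exponent in the Borel--Cantelli sequence.
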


The result is obtained by a simplification of our proof: just remove any dependence in $y$. In particular, one can use formula \eqref{equ-variance-final-dim2} with $u_{21}=u_{22}=u_{12}=0$. 

\begin{theorem}
Let $\Lambda$ be a locally maximal hyperbolic set of a $C^{1+\alpha}$ diffeomorphism $T$, for some $\alpha>0$, such that $T$ is conformal and topologically mixing on $\Lambda$, and $\mu$ be the equilibrium measure of a H\"older continuous $\varphi\colon \Lambda\to\R$.

Denote the asymptotic variance of $\varphi+\frac{h_{\mu_\varphi}}{\lambda_s}\log \|df|E^s\|$ by $\sigma_s^2$.
Denote the asymptotic variance of $\varphi+\frac{h_{\mu_\varphi}}{\lambda_u}\log \|df|E^u\|$ by $\sigma_u^2$.

Then the statement of the main theorem holds. The variance of the limit is $\sigma^2:=\frac{\sigma_s^2}{\lambda_s}+\frac{\sigma_u^2}{\lambda_u}$, which vanishes iff $\mu$ has full dimension in $\Lambda$.
\end{theorem}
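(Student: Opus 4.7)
The plan is to adapt the three-step proof of the Main Theorem to this two-sided hyperbolic setting, with the local product structure of $\Lambda$ replacing the skew-product structure. I would first fix a Markov partition $\CR$ of $\Lambda$ of small diameter whose elements have local product structure, and denote by $C_\eps(x) := \bigcap_{k=-m_\eps(x)}^{n_\eps(x)-1} T^{-k}\CR(T^k x)$ the two-sided Markov cylinder, where
\[
n_\eps(x) := \max\{k\ge 0 \colon \|dT^k|E^u(x)\|\, \eps\le 1\},\quad m_\eps(x) := \max\{k\ge 0 \colon \|dT^{-k}|E^s(x)\|\, \eps\le 1\}.
\]
These are the forward and backward times at which the unstable and stable leaves through $x$ contract to length $\eps$. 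Conformality eliminates any intermediate expansion rate, so $C_\eps(x)$ is, by bounded distortion, a geometric ``rectangle'' of unstable and stable radii of order $\eps$. Arguing exactly as in Lemma~\ref{lem:Cball}, I would prove $C_{\underline c\,\eps}(x)\subset B(x,\eps)\subset C_{\overline c_\eps\eps}(x)$ with $\overline c_\eps=O(|\log\eps|)$ almost everywhere, the outer inclusion again resting on a Borel--Cantelli control of the rate of approach of the orbit of $x$ to $\partial\CR$.

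The Gibbs property of $\mu$ together with its Bowen--Ruelle product structure along stable and unstable leaves yields, in the spirit of Lemma~\ref{lem:Cepssum}, a decomposition
\[
\log\mu(C_\eps(x))\approx S_{n_\eps}(\varphi-\psi^s)(x)+S_{m_\eps}(\varphi-\psi^u)(T^{-m_\eps}x),
\]
where $\psi^s,\psi^u$ are the conditional Gibbs potentials along stable/unstable cylinders. Combined with the Young-type formula $\delta=\frac{h_\mu}{\lambda_u}+\frac{h_\mu}{|\lambda_s|}$ and the asymptotics $n_\eps/|\log\eps|\to 1/\lambda_u$, $m_\eps/|\log\eps|\to 1/|\lambda_s|$ almost surely, after subtracting $\delta\log\eps$ and normalizing by $\sqrt{-\log\eps}$ the process $N_\eps'$ is reduced, up to $O(1/\sqrt{-\log\eps})$, to
\[
N_\eps''(t)=\frac{S_{n_{\eps^t}}\phi_u(x)+S_{m_{\eps^t}}\phi_s(T^{-m_{\eps^t}}x)}{\sqrt{-\log\eps}},
\]
where $\phi_u$ and $\phi_s$ are centered H\"older observables cohomologous respectively to $\varphi+\frac{h_\mu}{\lambda_u}\log\|dT|E^u\|$ and $\varphi+\frac{h_\mu}{\lambda_s}\log\|dT|E^s\|$, of asymptotic variances $\sigma_u^2$ and $\sigma_s^2$.

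The backward Birkhoff sum of $\phi_s$ is first rewritten, using $T$-invariance of $\mu$, as an ordinary forward Birkhoff sum along the natural extension. The vector-valued weak invariance principle of Theorem~\ref{thm:wip} then applies to $(\phi_u,\phi_s)$, and a random change of time exactly as in Section~\ref{section-proba-wip}, with time scales $1/\lambda_u$ and $1/|\lambda_s|$, produces a Wiener limit whose variance, via the analogue of formula~\eqref{equ-variance-final-dim2} in which the two ``times'' are completely split between the two diagonal directions, reduces to
\[
\sigma^2=\frac{\sigma_u^2}{\lambda_u}+\frac{\sigma_s^2}{|\lambda_s|}.
\]
The vanishing of $\sigma^2$ forces both $\phi_u$ and $\phi_s$ to be cohomologous to zero; by Bowen's pressure equation applied separately in the stable and unstable directions, this is exactly the characterization of $\mu$ as the measure of full dimension on $\Lambda$, paralleling Proposition~\ref{pro:varzero-acim}. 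The main obstacle is the careful bookkeeping of the two time directions: one must arrange the backward stable sum and the forward unstable sum so that the joint WIP applies and the two time changes decouple in the limit. Conformality is essential here, since it collapses the Ledrappier--Young partial dimensions into a single rank-one contribution in each direction.
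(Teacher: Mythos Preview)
Your reduction to a two-sided cylinder $C_\eps(x)$ and the resulting decomposition into a forward sum of $\phi_u$ and a backward sum of $\phi_s$ is exactly the paper's route. The gap is in how you pass from this decomposition to the limiting variance. You propose to rewrite the backward sum $S_{m_\eps}\phi_s(T^{-m_\eps}x)$ as a forward sum ``by $T$-invariance'' and then invoke the joint WIP for the vector $(\phi_u,\phi_s)$ under forward iteration. But $T$-invariance only equates the \emph{marginal} law of $S_m\phi_s(T^{-m}\cdot)$ with that of $S_m\phi_s(\cdot)$; it does not give you the \emph{joint} law of $\bigl(S_{n_\eps}\phi_u(x),\,S_{m_\eps}\phi_s(T^{-m_\eps}x)\bigr)$, which is what you need. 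The joint WIP of Theorem~\ref{thm:wip} governs $(S_{[kt]}\phi_u,S_{[kt]}\phi_s)$ with a common forward time, and your two sums run in opposite directions with different random clocks. Consequently your claim that ``the two times are completely split between the two diagonal directions'' and that the variance collapses to $\sigma_u^2/\lambda_u+\sigma_s^2/|\lambda_s|$ is unsupported: in the skew-product computation \eqref{equ-variance-final-dim2} the off-diagonal part of $Q$ genuinely contributes, and nothing in your argument forces $Q$ to be diagonal here.

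The paper's resolution is different and is precisely what circumvents this difficulty: it uses the local product structure not only for coordinates but for the Gibbs measure itself, $\mu_\varphi\approx\mu_\varphi^s\otimes\mu_\varphi^u$, and chooses $\phi_u$ to depend only on the unstable (future) coordinate and $\phi_s$ only on the stable (past) coordinate. Then $n_\eps$ depends only on the future and $m_\eps$ only on the past, so the two Birkhoff sums live on independent factors and are \emph{asymptotically independent}. One applies the (scalar) WIP separately to each factor, and independence gives that the limits add as independent Gaussians, whence $\sigma^2=\sigma_u^2/\lambda_u+\sigma_s^2/|\lambda_s|$ with no cross term. Your outline would be complete once you replace the joint-WIP step by this independence argument.
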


\begin{remark}
Although there always exists an invariant measure of maximal dimension in $\Lambda$, it is unlikely that $\Lambda$ supports an invariant measure with full dimension. Indeed, we generically have that $\sup_\mu\dim_H\mu<\dim_H(\Lambda)$.

An interesting situation is for the SRB, or physical measure. When $\Lambda$ is the whole manifold then generically the SRB measure does not have full dimension, in particular the variance $\sigma^2\neq0$.
\end{remark}

The proof here is somehow different. The key point is that there are local product structures, both for coordinates (see \emph{e.g.} \cite{Bowen}) and for Gibbs measures  (see \emph{e.g.} \cite{leplaideur1}). Moreover, if we locally set 
$$\mu_{\varphi}\approx\mu_{\varphi}^{s}\otimes\mu_{\varphi}^u,$$
these two measures $\mu_{\varphi}^{u}$ and $\mu_{\varphi}^{s}$ also satisfy some Gibbs property. 

Using these local coordinates, a ball $B((x,y),\eps)$ can be approximate by a cylinder of the form 
$$C_{-m_{\eps}(x)}^{n_{\eps}(y)}.$$
It is important here to note that the quantity $n_{\eps}$ depends only on the future (the unstable direction, coordinate $y$) and conversely, $-m_{\eps}$ depends only on the past (the stable direction, coordinate $x$). 
Then, using the local product structure for the Gibbs measures we get 
\begin{equation}
\label{equ1-casdim2hyperbol}
\mu_{\varphi}(B(x,y),\eps)\approx S_{n_{\eps}(y)}(\phi_u)(y)+S_{m_{\eps}(x)}(\phi_{s})(x),
\end{equation}
with $\phi_{s}$ ad $\phi_{u}$ H\"older continuous, both cohomologous to $\varphi$, and depending only on past (resp. future) coordinates. 
We also observe that the asymptotic distributions of both terms are independent.
Then adapt Section~\ref{section-proba-wip}.

\subsection{Possible extensions to other dynamical systems}

Our main hypotheses was the uniform expansion and skew product structure. It seems however that these hypotheses can be relaxed and we discuss this point below.

\subsubsection{Non-uniformly expanding maps of an interval.}

The first possibility is to relax the uniformity in the expansion.
There is a vast and still growing literature in this subject. However, these results mainly concern absolutely continuous invariant measures. As already said, these measures have no fluctuations and our result is irrelevant in these cases. For other potentials, the literature is not so large. Basically our method could be applied in principle for maps and their Gibbs measures, such that the (functional) CLT hold for a sufficiently regular class of observables.

It is not clear for the moment if the method could be adapted to conformal ``mostly expanding maps'' as studied by Oliveira and Viana in \cite{Oliveira-Viana}. Note that for these maps, the equilibrium state is not a Gibbs measure but only a non-lacunar Gibbs measure. This seems to be an obstruction to adapt our method. 

We emphasize that for some non-uniformly expanding maps the CLT does not hold in the classical form; for example we could be in the non-standard basin of attraction of the normal law; in that case we could prove a version of our main theorem with a suitable modification of the normalization.
A more difficult task is when we have a convergence to a stable law of some index $\alpha<2$. In that case we believe that our method could be carried out, but some difficulties may arise due to the discontinuity of the paths in non Brownian Levy process.

\subsubsection{Non-conformal without skew product structure}
The second and most challenging situation is for non-conformal maps without the skew product structure. Note that we used two strong consequences of this structure: 1) the Lyapunov splitting exists, without going through a natural extension and 2) the projected measure has the Gibbs property. Still, we believe that the result remains true in general.

\medskip\noindent
{\bf  Conjecture.}
{\it Let $M$ be a compact smooth Riemannian manifold and $T:M\circlearrowleft$ be an Axiom-A diffeomorphism. 
Let $\varphi$ be a H\"older continuous function from $M$ to $\R$. 
Let $\mu_{\varphi}$ be the equilibrium state associated to $\varphi$. 
Let $\delta$ be its Hausdorff dimension.

Then there exists a real number $\s\ge0$ such that the process 
$$
\frac{\log\mu_{\varphi}\left(B(x,\eps^{t})\right)-t\delta\log\eps}{\sqrt{-\log\eps}}
$$ 
converges in $\CD([0,1])$ and in distribution to the process $\sigma W(t)$, where $W$ is the standard Wiener process.}

In particular we believe that the SRB measure of a topologically mixing Anosov diffeomorphism of a compact Riemaniann manifold should enjoy this property, and that the variance will vanishes iff the measure is absolutely continuous.

\bibliographystyle{alpha}
\bibliography{mabiblio}

\end{document}